\newcommand{\R}{\mathbb{R}}
\newcommand{\beq}{\begin{eqnarray}}
\newcommand{\eeq}{\end{eqnarray}}
\newcommand{\bq}{\begin{equation}}
\newcommand{\eq}{\end{equation}}
\newcommand{\beqn}{\begin{eqnarray*}}
\newcommand{\eeqn}{\end{eqnarray*}}
\newcommand{\bex}{\begin{exo}}
\newcommand{\eex}{\end{exo}}
\newcommand{\ben}{\begin{enumerate}}
\newcommand{\een}{\end{enumerate}}
\newcommand{\bbN}{\mathbb{N}}
\newcommand{\bbR}{\mathbb{R}}
\newtheorem{th1}{{\bf Theorem}}[section]
\newtheorem{thm}[th1]{{\bf Theorem}}
\newtheorem{lem}[th1]{{\bf Lemma}}
\newtheorem{prop}[th1]{{\bf Proposition}}
\newtheorem{rem}[th1]{\bf Remark}
\newtheorem{defi}[th1]{\bf Definition}
\definecolor{lime}{HTML}{A6CE39}
\DeclareRobustCommand{\orcidicon}{
	\begin{tikzpicture}
	\draw[lime, fill=lime] (0,0) 
	circle [radius=0.16] 
	node[white] {{\fontfamily{qag}\selectfont \tiny ID}};
	\draw[white, fill=white] (-0.0625,0.095) 
	circle [radius=0.007];
	\end{tikzpicture}
	\hspace{-2mm}
}
\author[T. A. Enaoufal \& T. Saanouni]{Taif Abdullah Enaoufal \& Tarek Saanouni$^*$\orcidC}
\thanks{* Corresponding author.}
\address[T. Saanouni]{Department of Mathematics, College of Science, Qassim University, Buraydah, Kingdom of Saudi Arabia.}
\email{\sl\textcolor{blue}{t.saanouni@qu.edu.sa}}
\address[T. A. Enaoufal]{Department of Mathematics, College of Science, Qassim University, Buraydah, Kingdom of Saudi Arabia.}
\email{\sl\color{blue}{46121544@qu.edu.sa}}
\subjclass[2020]{35Q55}
\subjclass[2010]{35Q55}
\keywords{Biharmonic Inhomogeneous Schr\"odinger equation, nonlinear equations, ground state, blow-up, variational analysis.}
\title[BINLS]{Biharmonic non-linear Schr\"odinger equation with an unbounded inhomogeneous term}
\date{\today}
\begin{document}
\begin{abstract}
This paper is devoted to the analysis of a focusing nonlinear biharmonic Schr\"odinger equation in the presence of an unbounded growing up inhomogeneous term. The first main contribution of this work is the derivation of an inhomogeneous Gagliardo-Nirenberg inequality adapted to the unbounded weight, which provides the necessary control over the nonlinear term in terms of Sobolev norms. Building on this inequality, we then investigate the long-time behavior of solutions and establish a sharp dichotomy: solutions with initial data below the ground state energy either exist globally in time or experience finite-time blow-up. A distinctive feature of our results is that the analysis of the unbounded inhomogeneous term requires the imposition of radial symmetry on the initial data, which allows us to exploit certain Strauss type Sobolev estimates that would not hold in the general non-radial case. This work complements previous studies on biharmonic Schr\"odinger equations with singular inhomogeneities, highlighting both the challenges and the new phenomena that arise when the nonlinearity is weighted by a growing up unbounded function, which broke the space translation invariance of the standard homogeneous associated equation. 
\end{abstract}
\maketitle
\vspace{ 1\baselineskip}
\renewcommand{\theequation}{\thesection.\arabic{equation}}
\section{Introduction}
It is the purpose of this note, to investigate the focusing biharmonic inhomogeneous Schr\"odinger equation

\begin{align}
\left\{
\begin{array}{ll}
\textnormal{i}\partial_t v-\Delta^2  v=- |x|^{b}|v|^{q-1}v ;\\
v(0,\cdot)=v_0.
\label{S}
\end{array}
\right.
\end{align}

Here and throughout this work, we denote by the bi-Laplacian operator the expression
\[
\Delta^2 := \Delta(\Delta\,),
\]
which corresponds to applying the Laplacian operator twice. The function \(v\) represents the wave function, assumed to be complex-valued and depending on the variables \((t, x) \in \mathbb{R} \times \mathbb{R}^N\), where \(N\) is an integer satisfying \(N \geq 1\). The sign of the source gives a \emph{focusing} regime. Moreover, the notation \(|\cdot|^{b}\) designates an unbounded inhomogeneous term, for some positive real number \(b > 0\).

The theoretical foundation of this framework can be traced back to the seminal studies of \cite{Karpman} and later \cite{Karpman 1}. Their investigations were primarily driven by the need to extend the classical nonlinear Schr\"odinger equation in order to provide a more accurate description of high-intensity laser pulse propagation in bulk media characterized by Kerr-type nonlinearities. They observed that, in such regimes, the conventional paraxial approximation becomes inadequate, as it fails to account for important higher-order dispersive contributions.
To overcome this limitation, they systematically incorporated a fourth-order dispersion term into the governing evolution equation. This modification resulted in a more complete theoretical model, capable of capturing new classes of nonlinear wave behavior—most notably, mechanisms leading to the stabilization of solitons that would otherwise undergo collapse in the standard formulation. The inclusion of this higher-order term therefore yielded valuable insights into beam propagation dynamics that could not be obtained from the traditional second-order theory.
Within the setting of laser–plasma interactions, the source term appearing in this generalized model is interpreted as a nonlinear potential that both arises from and dynamically affects localized variations in the electron density. This physical interpretation of the source term is consistent with, and further supported by, the earlier theoretical developments presented in \cite{brg}.

The Schr\"odinger problem \eqref{S} admits two primary conservation laws: the preservation of \emph{mass} and that of \emph{energy}, which remain invariant under the flow of the equation.

\begin{align}
 \int_{\R^N}|v(t,x)|^2\,dx &:=\texttt{M}(v(t)) = \texttt{M}(v_0);\label{mass}\tag{Mass}\\   
 \int_{\R^N}\Big(|\Delta v(t,x)|^2-\frac{2}{1+q}|v(t,x)|^{1+q}|x|^{b}\Big)dx&:=\texttt{E}(v(t)) = \texttt{E}(v_0).\label{nrg}\tag{Energy}
\end{align}

The inhomogeneous Sobolev space \( H^2_{rd} \) is commonly called the \emph{energy space} because it offers the lowest level of regularity at which the conservation of mass and energy can be rigorously justified. Now, if \( v \) is a solution to equation \eqref{S}, then the next rescaled family is also a solution.   
\begin{align}
v_\omega(t,x) = \omega^{\frac{4+b}{q-1}}\, v(\omega^4 t, \omega x), \quad \omega > 0, 
\label{scl}
\end{align}
Moreover, the identity 
\[
\|v_\omega(t)\|_{\dot{H}^{s_c}} = \|v(\omega^4 t)\|_{\dot{H}^{s_c}}
\]
defines the so-called \emph{critical index} \( s_c:=\frac N2-\frac{4+b}{q-1} \), which characterizes the unique homogeneous Sobolev space invariant under the above scaling. The $\dot{H}^s$-critical exponents associated with the Schr\"odinger equation \eqref{S} is given by  

\begin{align}
s_c = s \;\Leftrightarrow\;
q^{e}_s :=
\begin{cases}
1 + \dfrac{8 + 2b}{N - 2s}, & \text{if }\quad N > 2s, \\[1.2ex]
\infty, & \text{if }\quad 1 \leq N \leq 2s.
\end{cases}
\label{crit-exp}
\end{align}
Moreover, $q_m := q_0^e$ denotes the mass-critical exponent, while \( q^{e}_s \) represents the $\dot{H}^s$-critical exponent, corresponding respectively to the cases where the scaling leaves the $L^2$ and $\dot{H}^s$ norms invariant.

In recent years, considerable research interest has focused on the following inhomogeneous fourth-order nonlinear Schr\"odinger equation:

\begin{align}
    \textnormal{i}\partial_t v + \Delta^2 v = \pm |x|^{-b}|v|^{q-1}v, \quad b > 0. \label{S'}
\end{align}

The initial mathematical foundation for this problem was laid by Guzmán and Pastor~\cite{gp}. Their work proved the local well-posedness of the system in the Sobolev space $H^2(\mathbb{R}^N)$ for dimensions $N \geq 3$, with the parameter $b$ in the interval $0 < b < \min\left\{\frac{N}{2}, 4\right\}$. Their analysis was valid for nonlinearity exponents $q$ satisfying $\max\left\{0, \frac{2(1-b)}{N}\right\} < q-1$, alongside the scaling constraint $(N-4)(q-1) < 8 - 2b$. Furthermore, they established global well-posedness for the mass-subcritical and mass-critical cases, characterized by $\min\left\{\frac{2(1-b)}{N}, 0\right\} < q-1 \leq \frac{2(4-b)}{N}$, subject to certain auxiliary conditions.
This theory was subsequently refined by Cardoso, Guzmán, and Pastor~\cite{cgp}, who extended the local well-posedness framework to the homogeneous Sobolev spaces $\dot{H}^s \cap \dot{H}^2$ for $N \geq 5$, $0 < s < 2$, and $0 < b < \min\left\{\frac{N}{2}, 4\right\}$. Their result required the exponent to satisfy $\max\left\{\frac{8 - 2b}{N}, 1\right\} < q-1 < \frac{8 - 2b}{N-4}$. Limitations pertaining to lower spatial dimensions and a lower bound on $q$ were later circumvented by the authors of~\cite{lz}, who leveraged advanced bilinear Strichartz estimates within the context of Besov spaces.
The global dynamics for initial data of small energy were investigated in the inter-critical regime in~\cite{gp} and for the energy-critical case in~\cite{gp2}. In the mass-critical setting, a substantial body of literature has been devoted to the analysis of finite-time blow-up phenomena for solutions with negative energy, a line of inquiry initiated by the seminal works~\cite{cow,vdd2,rbts}.
Significant advances have also been made in understanding the long-term asymptotic behavior (scattering) of global solutions in the repulsive inter-critical regime. The first author established in~\cite{st4} that for spherically symmetric initial data, such solutions not only exist globally but also scatter. This radial symmetry assumption was later removed in~\cite{cg2}, generalizing the scattering result to arbitrary (non-radial) initial data. A complete characterization of the dynamics below the ground state energy level, presenting a sharp dichotomy between global existence and finite-time blow-up, is provided in~\cite{sg2}. For the defocusing variant of the problem, energy scattering was demonstrated by the first author in~\cite{st1}.
A local existence theory for the energy-critical case was developed by the first author in~\cite{sg1,sp1}. A comprehensive series of studies by An et al.~\cite{akr1,akr2,akr3,akr4} undertook a systematic investigation of the inhomogeneous biharmonic NLS \eqref{S}. These works rigorously established local and global well-posedness, together with the standard property of continuous solution dependence on the initial data in $H^s(\bbR^N)$. Their results hold for all spatial dimensions $N \in \bbN$ and regularity indices $s$ satisfying $0 \leq s < \min\left\{\frac{2+N}{2}, \frac{3N}{2}\right\}$, under the parameter restrictions $0 < b < \min\left\{4, N, \frac{3N}{2}-s, \frac{N}{2}+2-s\right\}$ and $0 < q < q_s^e$.
In the critical case where $q = q_s^e$, these works also demonstrated that the corresponding Cauchy problem is locally well-posed in $H^s(\bbR^N)$, provided specific conditions are met. This conclusion applies when $N \geq 3$, the regularity index obeys $1 \leq s < \frac{N}{2}$, and the parameter $b$ is constrained to $0 < b < \min\left\{4, 2 + \frac{N}{2} - s \right\}$. This holds if $q$ is an even integer, or more generally, if the condition $q-1 > \lceil s \rceil - 2$ is satisfied\footnote{Here, $\lceil \cdot \rceil$ denotes the integer part function.}.
This article aims to systematically investigate the long-term dynamics of energy solutions with initial data below the ground state threshold, establishing a characterization of the dichotomy between global existence and finite-time blow-up in the spirit of the foundational work by \cite{hr}. Our analysis reveals that in the energy-subcritical regime, global existence follows from careful variational arguments that identify the associated ground state as a minimizer of an inhomogeneous Gagliardo-Nirenberg type inequality. For the inter-critical case, we demonstrate finite-time blow-up through a refined localized variance identity methodology, extending the techniques pioneered in \cite{bl}. The temporal structure of blow-up exhibits regime-dependent behavior: while mass-critical solutions may develop singularities in either finite or infinite time, the mass-supercritical regime universally exhibits finite-time breakdown. The primary mathematical challenge arises from the strongly singular inhomogeneous coefficient $|\cdot|^{b}$ for $b>0$, which renders conventional methods for $b<0$ inapplicable. Indeed, we can't decompose the integrals on the unit ball and its complementary in order to use the integrity of such a term in the case $b<0$, but also we can't use Hardy type estimates nor Lorentz spaces with $|x|^{b}\in L^{-\frac Nb,\infty}$; we overcome this obstruction by developing novel Strauss-type weighted estimates under spherical symmetry assumptions. To the best of our knowledge, this work presents the first comprehensive treatment of this singular problem, filling a significant gap in the current literature. Note that in the case of a Laplace operator in \eqref{S}, namely the INLS, a series of works exist in the literature \cite{CG-DCDS-B, Chen, Chen-CMJ, CG-AM, dms}.
For simplicity, we give some notations. Let the standard Lebesgue and Sobolev spaces and norms

\begin{align}
    L^r&:=L^r({\R^N}),\quad H^2:=H^2({\R^N}),\\
    \|\cdot\|_r&:=\|\cdot\|_{L^r},\quad\|\cdot\|:=\|\cdot\|_2,\quad \|\cdot\|_{H^2} := \Big(\|\cdot\|^2 + \|\Delta\cdot\|^2\Big)^\frac12.
\end{align}
Take also the radial Sobolev space $H^2_{rd}:=\{f\in H^2,\quad f(x)=f(|x|)\}$ and $T^{\max}>0$ be the lifespan of an eventual solution to \eqref{S}. Finally, we define the real numbers

\begin{align}
D:=\frac{Nq-N-2b}4,\quad E:=1+q-D. \label{ed}
\end{align}

The next sub-section contains the contribution of this manuscript.
\subsection{Main results}

We start with an inhomogeneous fourth-order Gagliardo-Nirenberg type estimate.

\begin{thm}\label{gag}
Let $N\geq2$, $b>0$ and $1+\frac{2b}{N-1}< q< q^e$. Thus, 
\begin{enumerate}
\item
there exists a sharp constant $\texttt{C}_{opt}>0$, such that for all $v\in H^2_{rd}$,

\begin{align}
\int_{\R^N}|v|^{1+q}|x|^{b}\,dx\leq \texttt{C}_{opt}\|v\|^E\|\Delta v\|^D;    \label{gag2}
\end{align}
\item
there exists $\zeta\in H^2_{rd}$ a minimizing of the problem

$$\frac1{\texttt{C}_{opt}}=\inf_{0\neq v\in H^2_{rd}}\frac{\|v\|^E\|\Delta v\|^D}{\int_{\R^N}|v|^{1+q}|x|^{b}\,dx},$$
 such that 
 \begin{align}\label{grnd}
\zeta+\Delta^2\zeta-|x|^{b}|\zeta|^{q-1}\zeta=0,\quad0\neq \zeta\in H^2_{rd};
\end{align}
\item
moreover,
\begin{align}\label{part32}
\texttt{C}_{opt}=\frac{1+q}E\big(\frac ED\big)^{\frac{D}2}\|\zeta\|^{-(q-1)};
\end{align}
\item
furthermore, the next Pohozaev identities holds

\begin{align}\label{poh}
\int_{\R^N}|\zeta|^{1+q}|x|^b\,dx=\frac{1+q}{E}\,\texttt{M}(\zeta)=\frac{1+q}{D}\|\Delta\zeta\|^2.
\end{align}
\end{enumerate}
\end{thm}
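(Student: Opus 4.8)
The plan is to run a Weinstein-type variational argument adapted to the unbounded weight $|x|^b$ by exploiting radial (Strauss) estimates. Introduce the scale-invariant Weinstein functional
$$J(v):=\frac{\|v\|^E\,\|\Delta v\|^D}{\int_{\R^N}|v|^{1+q}|x|^{b}\,dx},\qquad 0\neq v\in H^2_{rd},$$
and note that, because $E+D=1+q$ and $D$ is fixed by \eqref{ed}, $J$ is invariant both under the amplitude rescaling $v\mapsto\mu v$ and under the dilation $v\mapsto v(\omega\cdot)$ of \eqref{scl}. For item (1) it suffices to prove $\inf_{0\neq v\in H^2_{rd}}J(v)>0$ and to put $\texttt{C}_{opt}:=(\inf J)^{-1}$. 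To bound $\int|v|^{1+q}|x|^{b}\,dx$ from above I would split $|x|^{b}=\big(|x|^{\frac{N-1}{2}}\big)^{\sigma}|x|^{\,b-\sigma\frac{N-1}{2}}$ with $\sigma:=\frac{2b}{N-1}$, observe that $0<\sigma<q-1$ thanks to the lower bound on $q$, apply the Strauss radial inequality $\sup_{x\neq0}|x|^{\frac{N-1}{2}}|v(x)|\lesssim\|v\|^{1/2}\|\nabla v\|^{1/2}\lesssim\|v\|^{3/4}\|\Delta v\|^{1/4}$ together with Hölder, and then control the remaining factor $\|v\|_{1+q-\sigma}^{1+q-\sigma}$ by the classical fourth-order Gagliardo--Nirenberg inequality, available in the subcritical range secured by $q<q^e$ (for $q$ close to $q^e$ one may instead shift part of the weight onto the $\dot H^2$-level Strauss bound $\sup_{x\neq0}|x|^{\frac N2-2}|v(x)|\lesssim\|\Delta v\|$ and interpolate between the two regularity levels to cover the whole range). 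Forming the product and doing the scaling bookkeeping forces the exponents to be exactly $E$ and $D$, which is \eqref{gag2}.

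For item (2) take a minimizing sequence $(v_n)\subset H^2_{rd}$; using the two scaling invariances normalize $\|v_n\|=\|\Delta v_n\|=1$, so $\int|v_n|^{1+q}|x|^{b}\,dx\to\texttt{C}_{opt}$. Boundedness in $H^2_{rd}$ yields $v_n\rightharpoonup\zeta$ weakly in $H^2$ along a subsequence, and the decisive point is the \emph{compactness} of the embedding $H^2_{rd}\hookrightarrow L^{1+q}(|x|^{b}\,dx)$: near spatial infinity it follows from the Strauss pointwise decay, which outweighs the growth of $|x|^{b}$ precisely because $q-1>\frac{2b}{N-1}$; near the origin it follows from the vanishing of $|x|^{b}$ combined with a subcritical Sobolev bound; on bounded annuli it follows from Rellich--Kondrachov and the uniform $L^\infty$-bound that Strauss provides away from the origin. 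Hence $\int|v_n|^{1+q}|x|^{b}\,dx\to\int|\zeta|^{1+q}|x|^{b}\,dx=\texttt{C}_{opt}$, so $\zeta\neq0$, and weak lower semicontinuity of the norms forces $J(\zeta)=\texttt{C}_{opt}^{-1}$: $\zeta$ is a minimizer (and automatically radial). Writing $\tfrac{d}{d\varepsilon}J(\zeta+\varepsilon\varphi)\big|_{\varepsilon=0}=0$ for all $\varphi\in H^2_{rd}$ gives the Euler--Lagrange identity $E\|\Delta\zeta\|^2\zeta+D\|\zeta\|^2\Delta^2\zeta=\Lambda\,|x|^{b}|\zeta|^{q-1}\zeta$ with $\Lambda:=(1+q)\|\zeta\|^2\|\Delta\zeta\|^2\big(\int_{\R^N}|\zeta|^{1+q}|x|^{b}\,dx\big)^{-1}>0$; since $D,E>0$ under the standing hypotheses ($D>0$ because $q>1+\tfrac{2b}{N}$, and $E>0$ because $q<q^e$), the rescaling $\zeta\mapsto a\,\zeta(\lambda\cdot)$ with $\lambda^4=\frac{D\|\zeta\|^2}{E\|\Delta\zeta\|^2}$ and a suitable $a>0$ normalizes the coefficients and produces a (still minimizing) solution of \eqref{grnd}.

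For item (4), elliptic regularity applied to \eqref{grnd} gives $\zeta\in C^\infty(\R^N\setminus\{0\})$ with enough decay at infinity to legitimize the Pohozaev computations. With the action $S(w):=\tfrac12\|w\|^2+\tfrac12\|\Delta w\|^2-\tfrac1{1+q}\int_{\R^N}|w|^{1+q}|x|^{b}\,dx$, so that \eqref{grnd} reads $S'(\zeta)=0$, pairing with $\zeta$ gives $\|\zeta\|^2+\|\Delta\zeta\|^2=\int|\zeta|^{1+q}|x|^{b}\,dx$, while differentiating $\lambda\mapsto S(\zeta(\lambda\cdot))$ at $\lambda=1$ — equivalently pairing \eqref{grnd} with $x\cdot\nabla\zeta$ — and using $S'(\zeta)=0$ gives, after the algebra with \eqref{ed}, the relation $D\|\zeta\|^2=E\|\Delta\zeta\|^2$. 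Solving this $2\times2$ linear system yields $\int_{\R^N}|\zeta|^{1+q}|x|^{b}\,dx=\frac{1+q}{E}\texttt{M}(\zeta)=\frac{1+q}{D}\|\Delta\zeta\|^2$, i.e.\ \eqref{poh}. Finally, for item (3), since $\zeta$ minimizes $J$ we have $\texttt{C}_{opt}=\big(\int|\zeta|^{1+q}|x|^{b}\,dx\big)\big(\|\zeta\|^E\|\Delta\zeta\|^D\big)^{-1}$; substituting $\|\zeta\|^2=\frac{E}{1+q}\int|\zeta|^{1+q}|x|^{b}\,dx$ and $\|\Delta\zeta\|^2=\frac{D}{1+q}\int|\zeta|^{1+q}|x|^{b}\,dx$ from \eqref{poh}, using $E+D=1+q$, and re-expressing $\int|\zeta|^{1+q}|x|^{b}\,dx$ through $\|\zeta\|^2$ gives \eqref{part32} after a short simplification.

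The crux of the whole argument is the compactness of $H^2_{rd}\hookrightarrow L^{1+q}(|x|^{b}\,dx)$ for a weight that \emph{grows} at infinity; this is the single point where radial symmetry cannot be dispensed with, and where the two-sided restriction $1+\frac{2b}{N-1}<q<q^e$ is genuinely needed — in the non-radial setting this embedding is not even continuous for $q$ in that range. A secondary technical nuisance is justifying the Pohozaev identity at the regularity of a variational minimizer, which I would deal with by elliptic bootstrapping together with decay estimates for $\Delta^2+1$.
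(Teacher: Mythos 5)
Your proposal follows essentially the same Weinstein-type route as the paper: normalize a minimizing sequence of the quotient using the two scaling invariances, pass to the limit in the weighted potential term via the compact radial embedding $H^2_{rd}\hookrightarrow\hookrightarrow L^{1+q}(|x|^{b}\,dx)$ (itself obtained from the Strauss estimates), conclude strong convergence by weak lower semicontinuity, derive the Euler--Lagrange equation and rescale to reach \eqref{grnd}, and prove \eqref{poh} by testing with $\bar\zeta$ and differentiating the action along dilations. The only minor deviations are that you establish item (1) directly by splitting the weight with the Strauss bound and the classical fourth-order Gagliardo--Nirenberg inequality (correctly noting the need to interpolate with the $\dot H^2$-level Strauss estimate near $q^e$), whereas the paper extracts it from the minimization together with Lemma \ref{compact}, and that you deduce the sharp constant \eqref{part32} from the Pohozaev identities and minimality rather than from the explicit scaling parameters $\kappa,\nu$; both variants are equivalent.
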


In view of the results stated in the above theorem, some comments are in order.

\begin{itemize}
\item[$\spadesuit$]
The proof makes use of several ideas originally introduced by M. I. Weinstein \cite{wns}.
\item[$\spadesuit$]
The proof uses a compact Sobolev embedding given in Lemma \ref{compact}, which is prove in the appendix.
\item[$\spadesuit$]
The above result is essential to develop a variational analysis of energy solutions to \eqref{S}, in order to investigate the global versus non-global existence of solutions. This will be the objective of the next Theorem.
\item[$\spadesuit$]
In the complementary case $q<1+\frac{2b}{N-1}$, we have $\texttt{C}_{opt}=\infty$. Indeed, taking $\xi_n:=\xi(\cdot-n)$, where $\xi$ is a radial smooth function with support subset of $[0,1]$. Hence, 

\begin{align}
    \frac{\int_{\R^N}|\xi_n|^{1+q}|x|^{b}\,dx}{\|\xi_n\|^E\|\Delta\xi_n\|^D}  
    &\gtrsim    \frac{n^{-1+N+b}}{n^\frac{(D+E)(N-1)}2} \gtrsim    n^{1+\frac{2b}{N-1}-q}  \to\infty,\quad\mbox{as}\quad n\to\infty.
\end{align}
\end{itemize}

Using Theorem \ref{gag}, we obtain a dichotomy of global versus non-global existence of energy solutions to \eqref{S}.

\begin{thm}\label{glb}
Let $N\geq2$, $b\geq0$ and $\max\{q_m,1+\frac{2b}{N-1}\}\leq q\leq q^e$. Take a local solution to \eqref{S} denoted by ${v}\in C\big([0,T],H^2_{rd}\big)$, satisfying

\begin{align} 
{\texttt{E}({v_0})}^{s_c}{\texttt{M}({v_0})}^{2-s_c}<\texttt{E}(\zeta)^{s_c}\texttt{M}(\zeta)^{2-s_c}.\label{ss}
\end{align}

\begin{enumerate}
\item
 Assume that
 
\begin{align} \label{ss1}
\|\Delta {v_0}\|^{s_c}\|{v_0}\|^{2-s_c}<\|\Delta\zeta\|^{s_c}\|\zeta\|^{2-s_c},
\end{align}

then the solution is global whenever $q<q^e$ and is uniformly bounded if $q=q^e$.
\item
 Assume that $q\leq9$ and 
\begin{align} \label{ss2}
\|\Delta {v_0}\|^{s_c}\|{v_0}\|^{2-s_c}>\|\Delta\zeta\|^{s_c}\|\zeta\|^{2-s_c},
\end{align}
then, the solution blows-up in finite time whenever $q_m<q$ and blows-up in infinite time if $q=q_m$.
 
\end{enumerate}
\end{thm}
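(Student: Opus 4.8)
\medskip
\noindent\textbf{Proof strategy.} The two alternatives are both driven by the variational role of $\zeta$ established in Theorem~\ref{gag}, the conservation laws \eqref{mass}--\eqref{nrg}, and, for the blow-up half, a \emph{localized} virial identity. The plan is to first trap the scale-invariant quantity $x(t):=\|\Delta v(t)\|^{s_c}\|v_0\|^{2-s_c}$ (well defined on $[0,T^{\max})$ by \eqref{mass}) on one side of its ground-state value, then to read off global existence in case \eqref{ss1} and a convexity argument in case \eqref{ss2}. For the trapping step: multiplying \eqref{gag2} by $\texttt{M}(v_0)^{(2-s_c)/s_c}$, inserting it into \eqref{nrg}, and using the algebraic identity $E=\tfrac{(2-s_c)(D-2)}{s_c}$ coming from \eqref{ed}, one obtains, for every $t\in[0,T^{\max})$,
\begin{align}\label{plan-trap}
h\big(x(t)\big)\leq\big(\texttt{E}(v_0)^{s_c}\texttt{M}(v_0)^{2-s_c}\big)^{1/s_c},\qquad h(z):=z^{2/s_c}-\tfrac{2\texttt{C}_{opt}}{1+q}\,z^{D/s_c}.
\end{align}
Since $D\geq2$, with equality exactly when $q=q_m$, the map $h$ is increasing then decreasing on $[0,\infty)$, and by the Pohozaev identities \eqref{poh} and the explicit value \eqref{part32} of $\texttt{C}_{opt}$ it attains its maximum precisely at $z_\zeta:=\|\Delta\zeta\|^{s_c}\|\zeta\|^{2-s_c}$, with $h(z_\zeta)=\big(\texttt{E}(\zeta)^{s_c}\texttt{M}(\zeta)^{2-s_c}\big)^{1/s_c}$. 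Hence \eqref{ss} rules out $x(t)=z_\zeta$, and as $x$ is continuous it stays in the connected component of $\{z\geq0:\,h(z)\leq(\texttt{E}(v_0)^{s_c}\texttt{M}(v_0)^{2-s_c})^{1/s_c}\}$ containing $x(0)$: thus $x(t)<z_\zeta$ for all $t$ under \eqref{ss1} and $x(t)>z_\zeta$ for all $t$ under \eqref{ss2}, and the strictness in \eqref{ss} upgrades this, by the usual refinement, to a uniform gap $x(t)\leq(1-\delta_0)z_\zeta$, resp.\ $x(t)\geq(1+\delta_0)z_\zeta$, for some $\delta_0>0$. The borderline exponents $q=q_m$ (then $s_c=0$, $D=2$, $\texttt{E}(\zeta)=0$, and \eqref{ss}--\eqref{ss2} degenerate into comparison of $\texttt{M}(v_0)$ with $\texttt{M}(\zeta)$ together with the sign of $\texttt{E}(v_0)$) and $q=q^e$ (then $s_c=2$, $E=0$, and the mass drops out) are handled by the very same computation.

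\noindent\emph{Case \eqref{ss1}.} The bound $x(t)\leq(1-\delta_0)z_\zeta$ and \eqref{mass} give $\sup_{[0,T^{\max})}\|v(t)\|_{H^2}<\infty$. If $q<q^e$, the blow-up alternative of the $H^2_{rd}$ local Cauchy theory then forces $T^{\max}=\infty$; if $q=q^e$ we retain only the uniform $H^2$ bound, since at the energy-critical exponent such a bound does not by itself upgrade to global existence — which is exactly the dichotomy stated in part (1).

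\noindent\emph{Case \eqref{ss2}.} Fix a radial cut-off $\phi_R$ with $\phi_R(x)=|x|^2$ on $\{|x|\leq R\}$, $\phi_R$ constant on $\{|x|\geq2R\}$, $0\leq\phi_R''\leq2$, $|\nabla^k\phi_R|\lesssim R^{2-k}$, and set $\mathcal{V}_R(t):=\int_{\R^N}\phi_R|v(t)|^2\,dx\geq0$. The localized virial identity for \eqref{S} takes the form
\begin{align}\label{plan-vir}
\mathcal{V}_R''(t)=16\|\Delta v(t)\|^2-\tfrac{4(N(q-1)-2b)}{q+1}\int_{\R^N}|x|^b|v(t)|^{q+1}\,dx+\mathcal{R}_R(t),
\end{align}
where $\mathcal{R}_R$ collects all the terms carrying at least one derivative of $\phi_R$ beyond the constant $\phi_R''\equiv2$; these are supported in $\{|x|>R\}$ and satisfy $|\mathcal{R}_R(t)|\lesssim R^{-2}\|v(t)\|_{H^2}^2+\int_{|x|>R}|x|^b|v(t)|^{q+1}\,dx$. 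Feeding $\int|x|^b|v|^{q+1}=\tfrac{1+q}{2}(\|\Delta v\|^2-\texttt{E}(v_0))$ from \eqref{nrg} into \eqref{plan-vir}, the main part becomes $-8(D-2)\|\Delta v(t)\|^2+8D\,\texttt{E}(v_0)$, which, by the trapping step and $\texttt{E}(\zeta)=\tfrac{D-2}{D}\|\Delta\zeta\|^2$, is $\leq-\delta_1<0$ uniformly in $t$. It remains to absorb $\mathcal{R}_R$: the $R^{-2}$ piece contributes at most $\varepsilon\|\Delta v(t)\|^2$ plus a term tending to $0$ as $R\to\infty$, while $\int_{|x|>R}|x|^b|v|^{q+1}$, which \emph{a priori grows} with $R$ because $|x|^b\to\infty$, is controlled by the radial Strauss estimate $\||x|^{(N-1)/2}v\|_{L^\infty(|x|>R)}\lesssim\|v\|^{1/2}\|\nabla v\|^{1/2}$ together with $\|\nabla v\|^2\leq\|v\|\,\|\Delta v\|$, which yields
\begin{align}\label{plan-strauss}
\int_{|x|>R}|x|^b|v(t)|^{q+1}\,dx\ \lesssim\ R^{\,b-\frac{(N-1)(q-1)}{2}}\,\|v_0\|^{\theta}\,\|\Delta v(t)\|^{\frac{q-1}{4}},\qquad\theta=\tfrac{3(q-1)}{4}+2.
\end{align}
The exponent of $R$ is negative precisely because $q>1+\tfrac{2b}{N-1}$, and the exponent of $\|\Delta v(t)\|$ is $\leq2$ precisely because $q\leq9$; choosing $R$ large, and — in the mass-supercritical regime — first absorbing a small $\varepsilon\|\Delta v\|^2$ into the leading term thanks to the lower bound $\|\Delta v(t)\|^2\gtrsim1$ from the trapping step, one gets $|\mathcal{R}_R(t)|\leq\tfrac{\delta_1}{2}$ for all $t$. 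Hence $\mathcal{V}_R''(t)\leq-\tfrac{\delta_1}{2}$ on $[0,T^{\max})$, and two integrations contradict $\mathcal{V}_R\geq0$ unless $T^{\max}<\infty$, giving finite-time blow-up when $q>q_m$. When $q=q_m$ one has $D=2$, the main part of \eqref{plan-vir} reduces to $16\,\texttt{E}(v_0)<0$, and the same convexity argument (now choosing $R$ large in terms of an assumed uniform bound on $\|\Delta v\|$, toward a contradiction) shows the solution cannot be simultaneously global and bounded in $H^2_{rd}$, i.e.\ it blows up in finite or infinite time.

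\noindent\emph{Where the difficulty lies.} Everything downstream of the variational estimates and the virial identity is routine; the genuinely delicate point is the error bound \eqref{plan-strauss} for the \emph{unbounded} weight. For $b<0$ one would split $\{|x|<1\}\cup\{|x|\geq1\}$ and exploit the local integrability of $|x|^b$; for $b>0$ this is unavailable, and neither Hardy-type inequalities nor the Lorentz-space tools that work for negative $b$ can be used, so the only leverage is the pointwise radial decay of $v$. This is exactly what forces the window $1+\tfrac{2b}{N-1}<q\leq9$ and the radial-symmetry hypothesis, and it is the step I expect to carry the technical weight; a secondary but lengthy point is the careful bookkeeping of the numerous $\phi_R$-derivative terms appearing in the fourth-order localized virial identity \eqref{plan-vir}.
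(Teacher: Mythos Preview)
Your trapping step and the handling of the exterior error by the radial Strauss bound \eqref{plan-strauss} are correct and essentially coincide with the paper's Lemma~\ref{stbl} and with \eqref{5.5}--\eqref{5.6}. The gap is in the blow-up half: for the \emph{biharmonic} equation the second time derivative of the localized second moment $\mathcal{V}_R(t)=\int\phi_R|v|^2$ does \emph{not} have the form \eqref{plan-vir}. A quick scaling check already rules it out: under $v\mapsto v(\lambda^4 t,\lambda x)$ one has $\mathcal V_R''\sim\lambda^{6-N}$ while $\|\Delta v\|^2\sim\lambda^{4-N}$, so no identity $\mathcal V_R''=16\|\Delta v\|^2+\dots$ can hold. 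A direct computation with $\phi=|x|^2$ confirms this: the linear main term is $32\|\nabla\Delta v\|^2$, i.e.\ the $\dot H^3$ seminorm, which is not controlled at the $H^2$ energy level. Thus the Glassey convexity argument you sketch (two integrations against $\mathcal V_R\geq0$) does not close.

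This is exactly why the paper, following Boulenger--Lenzmann, works instead with the Morawetz action $M_R=2\Im\int \bar v\,\nabla\chi_R\cdot\nabla v$ (see \eqref{mrwtz}) and differentiates it \emph{once}: by Lemma~\ref{mrw1} the main part of $M_R'$ is $8\|\Delta v\|^2-\tfrac{8D}{1+q}\int|x|^b|v|^{1+q}$, and your Strauss/Young error control applies verbatim to give \eqref{5.7}--\eqref{5.8}. The price is that $M_R$ is not the derivative of a nonnegative quantity, so one cannot finish by convexity; instead the paper integrates $M_R'\lesssim-\|\Delta v\|^2$ and invokes the ODE Lemma~\ref{ode} (using $|M_R|\lesssim\|v_0\|^{3/2}\|\Delta v\|^{1/2}$) to force $T^{\max}<\infty$ in the mass-supercritical case, and to contradict a presumed uniform $H^2$ bound in the mass-critical case. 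Once you replace $\mathcal V_R''$ by $M_R'$ and the double integration by Lemma~\ref{ode}, the rest of your outline goes through.
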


Given the results presented in the preceding theorem, a few remarks are warranted.

\begin{itemize}
\item[$\spadesuit$]
The assumption $q_m\leq q\leq q^e$ is equivalent to $0\leq s_c\leq2$. 
\item[$\spadesuit$]
The restriction $q\geq 1+\frac{2b}{N-1}$ is due to the method used here, precisely of some Strauss type estimates, namely \eqref{frcs}-\eqref{frcs'}.
\item[$\spadesuit$]
The condition $q\leq9$ is needed in the proof of the finite time blow-up for some use of Young estimate, precisely in \eqref{5.7}. Indeed, an estimate of the potential term with Strauss type estimates and the mass conservation law, gives a term $R^{-\delta}\|\nabla v \|^{\frac{q-1}{2}}\lesssim R^{-\delta}\|\Delta v\|^\frac{q-1}{4}$. This requires $\frac{q-1}{4}\leq 2$, in order to estimate to write by Young estimate $R^{-\delta}\|\Delta v\|^\frac{q-1}{4}\lesssim R^{-\delta_1}+R^{-\delta_2}\|\Delta v\|^2$. These terms are quiet absorbable.
\item[$\spadesuit$]
The assumption $q\leq9$, may be removed whenever $q^e\leq9$, which reads $b\leq4(N-5)$. 
\item[$\spadesuit$]
In the energy critical case, we have an uniform bound $\displaystyle\sup_{t<T^{\max}}\|v(t)\|_{H^2}<\infty$. This is namely given in \eqref{4.2} via the mass conservation law.
\item[$\spadesuit$]
In the mass-critical regime, the finite time blow-up of energy solutions to \eqref{S}, is still open even for the standard case $b=0$, with radial data, see \cite[Theorem 3]{bl}.
\item[$\spadesuit$]
In the proof, we establish that the assumptions \eqref{ss}-\eqref{ss1}, but also \eqref{ss}-\eqref{ss2} are invariant under the flow of \eqref{S}.
\item[$\spadesuit$]
The scattering of global solutions, is treated in a paper in progress. 
\end{itemize}

The next sub-section contains some standard tools.
\subsection{Useful estimates}

 We start with the next Hardy estimate \cite[Theorem 0.1]{majb}.

\begin{lem}
    Let $1<r<\infty$, $0<s<\frac Nr$ and $u\in \dot W^{s,r}(\R^N)$, then
    \begin{align}
        \||\cdot|^{-s}u\|_r\leq C_{N,s,r}\||\nabla|^su\|_{r}.\label{hrd}
    \end{align}
\end{lem}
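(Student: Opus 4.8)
The plan is to reduce \eqref{hrd} to the $L^r$-boundedness of a single integral operator whose kernel is homogeneous of degree $-N$, and then to close the estimate by a Schur test with a power weight; the admissible range of the weight's exponent will be governed precisely by the hypothesis $s<N/r$.

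First I would set $f:=|\nabla|^s u$, so that $\||\nabla|^su\|_r=\|f\|_r$, and invert the fractional Laplacian through the Riesz potential, writing $u=I_sf$ with $I_sf(x)=\gamma_{N,s}^{-1}\int_{\R^N}|x-y|^{s-N}f(y)\,dy$. This representation is legitimate on $\dot W^{s,r}$: by density of Schwartz functions and the Hardy--Littlewood--Sobolev inequality (valid since $0<s<N/r$), the operator $I_s$ maps $L^r$ continuously into $L^{r^*}$ with $r^*=\tfrac{Nr}{N-sr}$ and satisfies $I_s|\nabla|^s=\mathrm{Id}$ on the homogeneous Sobolev space. Consequently, after absorbing $\gamma_{N,s}$ into the constant, \eqref{hrd} is equivalent to the boundedness on $L^r(\R^N)$ of
\[
Tf(x):=|x|^{-s}\int_{\R^N}\frac{f(y)}{|x-y|^{N-s}}\,dy .
\]

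Next I would record that the kernel $K(x,y):=|x|^{-s}\,|x-y|^{-(N-s)}\ge 0$ is homogeneous of degree $-N$, since the factor $|x|^{-s}$ contributes degree $-s$ and the Riesz factor degree $-(N-s)$. For such kernels the natural tool is the asymmetric Schur test with a power weight $h(x):=|x|^{-a}$: I would verify that
\[
\int_{\R^N}K(x,y)\,h(y)^{r'}\,dy\le A\,h(x)^{r'},\qquad \int_{\R^N}K(x,y)\,h(x)^{r}\,dx\le B\,h(y)^{r},
\]
which yields $\|T\|_{L^r\to L^r}\le A^{1/r'}B^{1/r}$. Carrying out the two integrals by the scalings $y\mapsto|x|y$ and $x\mapsto|y|x$ reduces each to a fixed radial integral over $\R^N$; inspecting convergence at the origin, at the kernel singularity, and at infinity, the first integral is finite iff $s<ar'<N$ and the second iff $ar<N-s$.

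The crux, and the only place where the sharp hypothesis enters, is the existence of an admissible exponent $a$: the two conditions read $a\in(s/r',\,N/r')$ together with $a<(N-s)/r$, and a common solution exists exactly when $s/r'<(N-s)/r$, i.e. when $s<N/r$; under this assumption I would simply fix $a$ slightly above $s/r'$. This produces finite constants $A,B$ and hence the claimed bound with $C_{N,s,r}=\gamma_{N,s}^{-1}A^{1/r'}B^{1/r}$. The main obstacle is therefore twofold: justifying the inversion identity $u=I_s(|\nabla|^su)$ on $\dot W^{s,r}$ with the attendant density and Hardy--Littlewood--Sobolev arguments, and tracking the three regions in the Schur integrals so that the relation $s<N/r$ emerges as the sharp compatibility condition. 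The integral computations themselves are of Beta-function type and routine.
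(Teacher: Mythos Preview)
Your argument is correct. Writing $u=I_s(|\nabla|^s u)$ via the Riesz potential reduces \eqref{hrd} to the $L^r$-boundedness of the operator with kernel $K(x,y)=|x|^{-s}|x-y|^{-(N-s)}$, and the weighted Schur test with $h(x)=|x|^{-a}$ goes through exactly as you describe: the scaling $y\mapsto |x|y$ (resp.\ $x\mapsto |y|x$) collapses each condition to a fixed integral whose finiteness reads $s<ar'<N$ (resp.\ $0<ar<N-s$), and the compatibility $s/r'<(N-s)/r$ is equivalent to $s<N/r$ after using $r+r'=rr'$. The density step for $u=I_s(|\nabla|^s u)$ on $\dot W^{s,r}$ is standard under $0<s<N/r$.

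As for the comparison: the paper does \emph{not} give its own proof of this lemma. It is stated with a direct citation to \cite{majb} (Aldovardi--Bellazzini) and used as a black box. So there is no in-paper argument to compare against; your Schur-test route supplies a self-contained proof where the paper relies on the literature. If anything, your approach is the classical one for Stein--Weiss/Hardy inequalities with homogeneous kernels, and it has the advantage of making transparent why the threshold $s=N/r$ is sharp.
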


Sobolev injections \cite{co} will be useful.
\begin{lem}\label{sblv}
For $N\geq2$, hold
\begin{enumerate}
\item
$H^2\hookrightarrow L^q$ for every $q\in[2,\frac{2N}{N-4}]$ if $N\geq5$ and every $2\leq q<\infty$ if $N\leq4$;
\item
 $H^2_{rd} \hookrightarrow\hookrightarrow L^q$ is compact for every $q\in(2,\frac{2N}{N-4})$ if $N\geq5$ and every $2<q<\infty$ if $N\leq4$;
\item
for any $u\in H^1_{rd}$ and any $\frac12\leq s<1$, holds

\begin{align}\label{frcs}
\sup_{x\in\R^N}|x|^{\frac{N-2s}2}|u(x)|\leq C_{N,s}\|u\|^{1-s}\|\nabla v\|^s.\end{align}
\item
for any $\frac12< s<\frac N2$ and any $u\in\dot H^s_{rd}$, holds

\begin{align}\label{frcs'}
\sup_{x\in\R^N}|x|^{\frac{N-2s}2}|u(x)|\leq C_{N,s}\||\nabla|^s u\|.\end{align}
\end{enumerate}
\end{lem}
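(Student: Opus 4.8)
Items (1) and (2) are classical, and I would only recall their proofs. For (1), the Sobolev embedding $\dot H^{2}\hookrightarrow L^{2N/(N-4)}$ (valid for $N\ge5$) interpolated against $L^{2}$ yields $H^{2}\hookrightarrow L^{q}$ on the stated range, while for $N\le4$ one has $H^{2}\hookrightarrow L^{q}$ for every finite $q$; see \cite{co}. For (2), the key point is that the radial decay of item (3) with $s=\tfrac12$, namely $\sup_{|x|\ge\rho}|u(x)|\lesssim\rho^{-(N-1)/2}\|u\|_{H^{1}}$, holds \emph{uniformly} over bounded subsets of $H^{2}_{rd}\subset H^{1}_{rd}$; this gives uniform smallness of $\int_{|x|\ge\rho}|u|^{q}\,dx$ for $q>2$ as $\rho\to\infty$, and coupling this tail control with the local Rellich--Kondrachov compactness $H^{2}(B_{\rho})\hookrightarrow\hookrightarrow L^{q}(B_{\rho})$ on subcritical exponents gives the compact embedding by a diagonal extraction.

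For item (3) I would first treat the endpoint $s=\tfrac12$ directly: by density we may take $u\in C_{c}^{\infty}$ radial, and the fundamental theorem of calculus together with $r\le\rho$ gives
\[
r^{N-1}|u(r)|^{2}=-2\,r^{N-1}\!\int_{r}^{\infty}\!\mathrm{Re}\big(\bar u(\rho)u'(\rho)\big)\,d\rho\le 2\!\int_{r}^{\infty}\!|u(\rho)|\,|u'(\rho)|\,\rho^{N-1}\,d\rho,
\]
after which Cauchy--Schwarz in the measure $\rho^{N-1}d\rho$ bounds the right side by $C_{N}\|u\|\,\|\nabla u\|$, which is \eqref{frcs} at $s=\tfrac12$. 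For $\tfrac12<s<1$ I would avoid a new one-dimensional argument and instead deduce \eqref{frcs} from \eqref{frcs'}: since $s<1\le\tfrac N2$ for $N\ge2$, item (4) gives $\sup_{x}|x|^{(N-2s)/2}|u(x)|\lesssim\||\nabla|^{s}u\|$, while Plancherel and Hölder in $|\widehat u|^{2}\,d\xi$ give $\||\nabla|^{s}u\|\le\|u\|^{1-s}\|\nabla u\|^{s}$, and the two combine to \eqref{frcs}.

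The remaining, and genuinely technical, point — the main obstacle — is item (4). Set $f:=|\nabla|^{s}u\in L^{2}_{rd}$, so that $u=c_{N,s}\,|\cdot|^{s-N}\ast f$. The naive bound $|u(x)|\le\|f\|_{2}\,\big\|\,|x-\cdot|^{s-N}\big\|_{2}$ is useless because $|x-\cdot|^{2(s-N)}$ fails to be integrable near the diagonal when $s<\tfrac N2$. Instead I would split the convolution over the three regions $|y|<\tfrac12|x|$, $\tfrac12|x|\le|y|\le2|x|$ and $|y|>2|x|$. On the first region $|x-y|\sim|x|$ and plain Cauchy--Schwarz over a ball of radius $\sim|x|$ produces the factor $|x|^{s-N/2}\|f\|_{2}=|x|^{-(N-2s)/2}\|f\|_{2}$; on the third region $|x-y|\sim|y|$ and the weighted Cauchy--Schwarz $\int_{|y|>2|x|}|y|^{2(s-N)}\,dy\sim|x|^{2s-N}$ (finite precisely because $s<\tfrac N2$) gives the same factor; on the middle shell I would use radiality to integrate the kernel over each sphere $|y|=\rho\sim|x|$, reducing to a one-dimensional singular integral in $\rho$ whose control by Cauchy--Schwarz again yields $|x|^{-(N-2s)/2}\|f\|_{2}$, the convergence of that one-dimensional integral near $\rho=|x|$ being exactly what forces $s>\tfrac12$. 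Summing the three contributions gives \eqref{frcs'}. (An equivalent route is a Littlewood--Paley decomposition $u=\sum_{j}P_{j}u$ combined with the radial Bernstein estimate $\|P_{j}u\|_{L^{\infty}(|x|\sim R)}\lesssim(2^{j}/R)^{(N-1)/2}\|P_{j}u\|_{2}$ for $R\gtrsim2^{-j}$ together with $\|P_{j}u\|_{2}\lesssim2^{-js}\|f\|_{2}$.) Since \cite{co} is cited, one may alternatively just quote (3)--(4) from there; but the above is the self-contained argument I would write, with the constraints $s>\tfrac12$ and $s<\tfrac N2$ emerging respectively from the middle-shell integral and from the integrability of the Riesz kernel.
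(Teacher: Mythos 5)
Your proposal is correct, but note that the paper does not prove Lemma \ref{sblv} at all: it simply quotes it from the cited reference \cite{co} (Cho--Ozawa), where item (4) is obtained by a Fourier--Bessel (one-dimensional spectral) argument rather than by your physical-space splitting. Your self-contained route is legitimate and standard: (1) is the usual Sobolev embedding, (2) is the Strauss compactness argument (uniform radial tail decay plus local Rellich--Kondrachov and a diagonal extraction), the $s=\tfrac12$ endpoint of (3) is Strauss's radial lemma, and deducing the full range $\tfrac12<s<1$ of (3) from (4) via $\||\nabla|^{s}u\|\le\|u\|^{1-s}\|\nabla u\|^{s}$ (Plancherel and H\"older in $|\widehat u|^{2}d\xi$) is clean. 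For (4), your three-region decomposition of the Riesz potential $u=c_{N,s}|\cdot|^{s-N}\ast|\nabla|^{s}u$ works; the only imprecision is in the middle shell: the sphere-averaged kernel behaves like $|\,|x|-\rho\,|^{\,s-1}|x|^{1-N}$ only for $s<1$ (which is all you need to feed item (3)), while for $1\le s<\tfrac N2$ the spherical integral is in fact nonsingular, of size $\lesssim |x|^{s-N}$ (with a harmless logarithm at $s=1$), and the same $|x|^{-(N-2s)/2}\|f\|_{2}$ bound follows even more directly; so the constraint $s>\tfrac12$ governs the singular sub-case rather than the whole range, and you should state that case distinction if you write the argument out. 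With that caveat, both your splitting argument and the Littlewood--Paley alternative you mention reproduce the quoted inequalities; what your approach buys is a self-contained, elementary proof, while the paper's citation simply outsources it to \cite{co}.
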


Finally, we give a helpful compact Sobolev embedding, proved in the appendix.

\begin{lem}\label{compact}
Let $N\geq1$ and $b\geq0$ and $2<1+q-\frac{2b}{N-1}<2+\frac{8}{N-4}$, where $\frac{1}{N-4}=\infty$, for $1\leq N\leq4$. Thus, the next injection is compact
$$H^2_{rd}\hookrightarrow\hookrightarrow L^{1+q}(|x|^{b}\,dx).$$
\end{lem}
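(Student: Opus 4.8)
The plan is to derive this weighted compact embedding from the \emph{unweighted} radial compact embedding already recorded in Lemma~\ref{sblv}~(2), by absorbing the singular factor $|x|^{b}$ into a power of the function through the Strauss--Ni estimate \eqref{frcs}. I will take $N\ge2$ (the only case consistent with the hypothesis once $b>0$). The exponent to keep in mind is
\[
1+\tilde q\ :=\ 1+q-\frac{2b}{N-1},
\]
for which the assumption $2<1+q-\frac{2b}{N-1}<2+\frac{8}{N-4}$ says precisely that $1+\tilde q$ lies in the open interval $\bigl(2,\tfrac{2N}{N-4}\bigr)$ (with $\tfrac{2N}{N-4}=\infty$ for $N\le4$), i.e.\ exactly the range where Lemma~\ref{sblv}~(2) gives $H^2_{rd}\hookrightarrow\hookrightarrow L^{1+\tilde q}(\R^N)$.

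First I would prove a weight-absorbing inequality: for $w\in H^2_{rd}\subset H^1_{rd}$, \eqref{frcs} with $s=\tfrac12$ together with $\|\nabla w\|\lesssim\|w\|_{H^2}$ gives $|w(x)|\lesssim|x|^{-\frac{N-1}{2}}\|w\|_{H^2}$ for a.e.\ $x$; raising this to the power $\tfrac{2b}{N-1}$ yields $|w(x)|^{\frac{2b}{N-1}}\lesssim|x|^{-b}\|w\|_{H^2}^{\frac{2b}{N-1}}$, so that $|w(x)|^{1+q}|x|^{b}\lesssim\|w\|_{H^2}^{\frac{2b}{N-1}}|w(x)|^{1+\tilde q}$ a.e., and after integration
\[
\int_{\R^N}|w|^{1+q}\,|x|^{b}\,dx\ \lesssim\ \|w\|_{H^2}^{\frac{2b}{N-1}}\int_{\R^N}|w|^{1+\tilde q}\,dx,\qquad w\in H^2_{rd},
\]
in which the weight has disappeared (at the harmless cost of a fixed power of the $H^2$ norm). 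Then the conclusion follows routinely: given a bounded sequence $(u_n)\subset H^2_{rd}$, I pass via reflexivity and Lemma~\ref{sblv}~(2) to a subsequence with $u_n\rightharpoonup u$ in $H^2$ and $u_n\to u$ in $L^{1+\tilde q}(\R^N)$, with $u\in H^2_{rd}$ since this subspace is weakly closed; applying the displayed inequality to $w=u_n-u$ and using that $\|u_n-u\|_{H^2}$ stays bounded while $\|u_n-u\|_{L^{1+\tilde q}}\to0$, I obtain $\int_{\R^N}|u_n-u|^{1+q}|x|^{b}\,dx\to0$, i.e.\ $u_n\to u$ in $L^{1+q}(|x|^{b}\,dx)$. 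Hence every bounded sequence in $H^2_{rd}$ admits a subsequence convergent in $L^{1+q}(|x|^{b}\,dx)$, which is the asserted compactness.

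I expect no genuine difficulty in the estimates; the one point that matters --- and the reason the hypothesis takes exactly this shape --- is the choice in the first step. One must peel off \emph{precisely} $\tfrac{2b}{N-1}$ powers of $|w|$, so that the Strauss decay $|x|^{-\frac{N-1}{2}}$ raised to that power is exactly $|x|^{-b}$ and cancels the weight, while the residual exponent $1+q-\tfrac{2b}{N-1}$ still has to land in the admissible window $\bigl(2,\tfrac{2N}{N-4}\bigr)$ for radial compactness. The lower bound $1+q-\tfrac{2b}{N-1}>2$ is what keeps the residual $L^{1+\tilde q}$-integral globally finite on $\R^N$, and the upper bound is the energy-subcriticality needed for genuine compactness rather than mere boundedness; note it is strictly stronger than the scaling bound $q<q^e$, since $\tfrac{2b}{N-1}<\tfrac{2b}{N-4}$.
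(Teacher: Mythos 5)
Your argument is correct, and it takes a genuinely different route from the paper's. The paper reduces to a weakly null bounded sequence and splits $\R^N$ into three regions: on $\{|x|\le\delta\}$ it bounds the weight by $\delta^b$ and controls the unweighted integral by a Gagliardo--Nirenberg/Sobolev bound; on the annulus $\{\delta\le|x|\le\delta^{-1}\}$ it combines the Strauss bound \eqref{frcs} with the Rellich theorem to get a term vanishing as $n\to\infty$; on $\{|x|\ge\delta^{-1}\}$ it uses \eqref{frcs} again, the hypothesis $q>1+\frac{2b}{N-1}$ giving a tail of size $\delta^{(q-1)\frac{N-1}{2}-b}$. You instead absorb the weight in one stroke: \eqref{frcs} with $s=\tfrac12$, raised to the power $\frac{2b}{N-1}$, turns the weighted integral into $\|w\|_{H^2}^{\frac{2b}{N-1}}\|w\|_{L^{1+\tilde q}}^{1+\tilde q}$ with $1+\tilde q=1+q-\frac{2b}{N-1}$, and the conclusion then falls out of the unweighted radial compact embedding of Lemma \ref{sblv}(2), whose window $(2,\frac{2N}{N-4})$ is exactly the stated hypothesis since $2+\frac{8}{N-4}=\frac{2N}{N-4}$. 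Your route is shorter, uses the two inequalities on $1+q-\frac{2b}{N-1}$ exactly where they are needed, and only ever requires the $L^{1+\tilde q}$ norm of $w$; by contrast, the paper's small-ball step invokes the unweighted $L^{1+q}$ norm, which for $N\ge5$ may lie above the unweighted $H^2$-critical exponent under the stated hypotheses, a delicacy your absorption sidesteps (the paper's splitting, in return, localizes precisely where each hypothesis acts, which is the structure reused later in the blow-up analysis). Two cosmetic points: as you note, both proofs really require $N\ge2$ (the case $N=1$, $b=0$ being the plain Lemma \ref{sblv}(2) statement), and the role of the lower bound $1+\tilde q>2$ is to place the exponent in the \emph{compact} range of Lemma \ref{sblv}(2) (the embedding into $L^2$ is bounded but not compact), rather than to make the residual integral finite.
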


The rest of this paper is organized as follows. Section \ref{sec3} proves the Gagliardo-Nirenberg type estimate in Theorem \ref{gag2}. Section \ref{sec4} establishes the global/non-global existence of energy solutions stated in Theorem \ref{glb}.

\section{Gagliardo-Nirenberg type estimate}\label{sec3}

This section is devoted to establish Theorem \ref{gag2}. This proof incorporates certain concepts introduced by M. I. Weinstein \cite{wns}. Let us denote by $(\zeta_n)\in H^2_{rd}$ be a minimizing sequence of \eqref{gag}, namely 

\begin{align}
\frac1{\texttt{C}_{opt}}
&:=\lim_n\frac{\|{\zeta_n}\|^E\|\Delta {\zeta_n}\|^D}{\int_{\R^N}|{\zeta_n}|^{1+q}\,|x|^{b}\,dx}\nonumber\\
&:=\lim_n\texttt{K}({\zeta_n}).\label{3.1}
\end{align}

Taking the scaling $\zeta^{{\kappa},{\nu}}:={\kappa}\zeta({\nu} \cdot)$, for ${\kappa},{\nu}\in\R$, we compute

\begin{align}
\|\Delta \zeta^{{\kappa},{\nu}}\|&={\kappa}{\nu}^{2-\frac N2}\|\Delta \zeta\|;\label{3.2}\\
\|\zeta^{{\kappa},{\nu}}\|&={\kappa}{\nu}^{-\frac N2}\|\zeta\|;\label{3.3}\\
\int_{\R^N}|\zeta^{{\kappa},{\nu}}|^{1+q}\,|x|^{b}\,dx&={\kappa}^{1+q}{\nu}^{-N-b}\int_{\R^N}|\zeta|^{1+q}\,|x|^{b}\,dx.\label{3.4}
\end{align}

Hence,

\begin{align}
\texttt{K}(\zeta^{{\kappa},{\nu}})=\texttt{K}(\zeta).\label{3.5}
\end{align}

Letting

\begin{align}
    {\nu}_n&:=\Big(\frac{\|{\zeta_n}\|}{\|\Delta  {\zeta_n}\|}\Big)^\frac12,\quad {\kappa}_n:=\frac{\|{\zeta_n}\|^{\frac N{4}-1}}{{\|\Delta  {\zeta_n}\|}^\frac N{4}},\quad   \varphi_n:={\zeta_n}^{{\kappa}_n,{\nu}_n},
\end{align}

it follows that by \eqref{3.2}, \eqref{3.3} and \eqref{3.5},

\begin{align}
{\frac1{\texttt{C}_{opt}}}=\lim_n\texttt{K}(\varphi_n) ,\quad 1=\|\varphi_n\|=\|\Delta \varphi_n\| .    
\end{align}

Take ${\varphi}\in H^2$ the weak limit of $\varphi_n$ in $H^2$. Lemma \ref{compact} ensures that for a subsequence, we have

\begin{align}
\frac1{\texttt{K}(\varphi_n)}={\int_{\R^N}|\varphi_n|^{1+q}\,|x|^{b}\,dx}\mathop{\longrightarrow}\limits_{n \to \infty}{\int_{\R^N}|{\varphi}|^{1+q}\,|x|^{b}\,dx}.    \label{3.6}
\end{align}

Moreover, by the lower semi continuity of ${H^2}$ norm, it follows that $\max\{ \|\Delta{\varphi}\|,\|{\varphi}\|\}\leq1.$ Thus, if we assume that $\|{\varphi}\|\|\Delta {\varphi}\|<1$, taking account of \eqref{3.6}, we get the contradiction

\begin{align}
    \texttt{K}({\varphi})<\frac1{\int_{\R^N}|{\varphi}|^{1+q}\,|x|^{b}\,dx}= {\frac1{\texttt{C}_{opt}}}.
\end{align}

Hence, $\|{\varphi}\|=\|\Delta{\varphi}\|=1$ and so, $\varphi_n\rightarrow{\varphi}$ {in} $H^2$ and

\begin{align}
{\frac1{\texttt{C}_{opt}}}&=\texttt{K}({\varphi})=\frac1{\int_{\R^N}|x|^{b}|{\varphi}|^{1+q}\,dx}.
\end{align}

Moreover, since the minimizer satisfies the Euler equation

\begin{align}
\partial_\lambda \texttt{K}\big({\varphi}+\lambda\psi\big)_{|\lambda=0}=0,\quad\forall \psi\in C_0^\infty\cap H^2,    
\end{align}

hence ${\varphi}$ satisfies the elliptic equation

\begin{align}\label{euler2}
D\Delta^2\varphi+E\varphi-\frac{1+q}{\texttt{C}_{opt}}|x|^{b}|\varphi|^{q-1}\varphi=0.
\end{align}

A direct calculus implies that, the the scaled function

\begin{align}
{\varphi}=\zeta^{{\kappa},{\nu}},\quad {\nu}=\Big(\frac ED\Big)^\frac1{4}\quad\mbox{and}\quad {\kappa}=\Big(\big(\frac{E}D\big)^\frac{b}{4}\frac{ E{\texttt{C}_{opt}}}{(1+q)}\Big)^\frac1{q-1},    
\end{align}

satisfies \eqref{grnd}. Hence, by the equalities $\|{\varphi}\|=1={\kappa}{\nu}^{-\frac N2}\|\zeta\|,$ we get \eqref{part32}. Finally, we prove 
the Pohozaev identities in \eqref{poh}. We test \eqref{gag} with $\bar\zeta$ and we integrate, 

\begin{align}
\|\Delta\zeta\|^2+\|{\zeta}\|^2={\int_{\R^N}|\zeta|^{1+q}|x|^b\,dx}.  \label{3.6'}  
\end{align}

We call action, the functional

\begin{align}
\texttt{S}({\zeta}):=\|\Delta{\zeta}\|^2+\|{\zeta}\|^2-\frac2{1+q}{\int_{\R^N}|\zeta|^{1+q}|x|^b\,dx}.    
\end{align}

Since \eqref{gag} gives $\texttt{S}'({\zeta})=0$, we have for any $\alpha,\beta\in\R$,

\begin{align}
0&=\partial_\lambda\Big(\texttt{S}({\zeta}^{\lambda^\alpha,\lambda^\beta})\Big)_{|\lambda=1}:=\partial_\lambda\Big(S\big(\lambda^\alpha{\zeta}(\lambda^\beta\cdot)\big)\Big)_{|\lambda=1}.
\end{align}
 
 Using \eqref{3.2}, \eqref{3.3} and \eqref{3.4}, we write
 
\begin{align}
\|\Delta{\zeta}^{\lambda^\alpha,\lambda^\beta}\|&={\lambda^{\alpha+\beta(2-\frac N2)}}\|\Delta {\zeta}\|;\\
\|{\zeta}^{\lambda^\alpha,\lambda^\beta}\|&={\lambda^{\alpha-\frac{N\beta}2}}\|{\zeta}\|,\\
{\int_{\R^N}|{\zeta}^{\lambda^\alpha,\lambda^\beta}|^{1+q}|x|^b\,dx}&=\lambda^{\alpha(1+q)-\beta(N+b)} {\int_{\R^N}|\zeta|^{1+q}|x|^b\,dx}.
\end{align}

Hence,

\begin{align}
\partial_\lambda\Big(\texttt{S}({\zeta}_{\alpha,\beta}^\lambda)\Big)_{|\lambda=1}
&=2(\alpha+\beta(2-\frac N2))\|\Delta{\zeta}\|^2+2(\alpha-\beta\frac N2))\|u\|^2\nonumber\\
&-2\frac{\alpha(1+q)-\beta(N+b)}{1+q}{\int_{\R^N}|\zeta|^{1+q}|x|^b\,dx}.
\end{align}

Letting $\alpha=N$ and $\beta=2$, it follows that

\begin{align}
    \|\Delta{\zeta}\|^2=\frac{D}{1+q}{\int_{\R^N}|\zeta|^{1+q}|x|^b\,dx}.\label{3.7}
\end{align}

Eventually, by \eqref{3.6'} and \eqref{3.7}, we get \eqref{poh} and the proof is closed.
\section{Global/non-global existence of energy solutions}\label{sec4}

This section proves Theorem \ref{glb} about a dichotomy of global versus non-global existence of energy solutions under the ground state threshold. We start with the next intermediary result.

\begin{lem}\label{stbl}
Let $v\in C\big([0,T],H^2_{rd}\big)$ be a local solution to \eqref{S}. Assume that \eqref{ss} holds, then
\begin{enumerate}
\item
if \eqref{ss1} holds, then, for any $0\leq t\leq T$,
\begin{align} \label{ss1'}
\|\Delta {v(t)}\|^{s_c}\|{v_0}\|^{2-s_c}<\|\Delta\zeta\|^{s_c}\|\zeta\|^{2-s_c};
\end{align}
\item
if \eqref{ss2} holds, then, for any $0\leq t\leq T$,
\begin{align} \label{ss2'}
\|\Delta {v(t)}\|^{s_c}\|{v_0}\|^{2-s_c}>\|\Delta\zeta\|^{s_c}\|\zeta\|^{2-s_c};
\end{align}
\end{enumerate}
\end{lem}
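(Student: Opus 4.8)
The plan is to run the standard ``trapping'' argument of Holmer--Roudenko \cite{hr}: combine the conservation of mass and energy with the sharp Gagliardo--Nirenberg inequality \eqref{gag2} to obtain a \emph{closed scalar inequality} for the scale-invariant quantity $\|\Delta v(t)\|^{s_c}\|v(t)\|^{2-s_c}$, and then use continuity in $t$ to prevent this quantity from crossing the ground-state value $\|\Delta\zeta\|^{s_c}\|\zeta\|^{2-s_c}$. Since \eqref{ss1'}--\eqref{ss2'} coincide with \eqref{ss1}--\eqref{ss2} when $s_c=0$, I may assume $0<s_c\leq 2$.

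First I would record the elementary scaling identities coming from \eqref{ed} and the definition of $s_c$,
$$D-2=\frac{(q-1)\,s_c}{2},\qquad E=\frac{(q-1)(2-s_c)}{2},\qquad\text{whence}\qquad E+\frac{2(2-s_c)}{s_c}=\frac{D(2-s_c)}{s_c}.$$
Using mass conservation $\|v(t)\|=\|v_0\|$, energy conservation, and \eqref{gag2},
$$\texttt E(v_0)=\|\Delta v(t)\|^2-\frac{2}{1+q}\int_{\R^N}|v(t)|^{1+q}|x|^b\,dx\ \geq\ \|\Delta v(t)\|^2-\frac{2\,\texttt C_{opt}}{1+q}\,\|v_0\|^{E}\,\|\Delta v(t)\|^{D},$$
and multiplying by $\|v_0\|^{2(2-s_c)/s_c}=\texttt M(v_0)^{(2-s_c)/s_c}$ and invoking the exponent identity turns this into
$$\texttt E(v_0)\,\texttt M(v_0)^{\frac{2-s_c}{s_c}}\ \geq\ f\bigl(y(t)\bigr),\qquad y(t):=\|\Delta v(t)\|^{s_c}\|v_0\|^{2-s_c},\qquad f(y):=y^{2/s_c}-\frac{2\,\texttt C_{opt}}{1+q}\,y^{D/s_c}.$$

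Next I would study $f$ on $[0,\infty)$. Since $D>2$, one has $f(0)=0$, $f$ strictly increasing on $[0,y_\ast]$ and strictly decreasing on $[y_\ast,\infty)$ with $f(y)\to-\infty$ as $y\to\infty$, where $y_\ast$ is the unique positive zero of $f'$, namely $y_\ast^{(q-1)/2}=\tfrac{1+q}{D\,\texttt C_{opt}}$. The crucial computation is to identify $y_\ast$ and $\max f=f(y_\ast)$ with the ground state: since $\zeta$ realizes equality in \eqref{gag2} (Theorem \ref{gag}), $\int_{\R^N}|\zeta|^{1+q}|x|^b\,dx=\texttt C_{opt}\|\zeta\|^{E}\|\Delta\zeta\|^{D}$, while the Pohozaev identities \eqref{poh} give $\int_{\R^N}|\zeta|^{1+q}|x|^b\,dx=\tfrac{1+q}{D}\|\Delta\zeta\|^2$; hence $\texttt C_{opt}\|\zeta\|^{E}\|\Delta\zeta\|^{D-2}=\tfrac{1+q}{D}$ and $\texttt E(\zeta)=\tfrac{D-2}{D}\|\Delta\zeta\|^2>0$. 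Raising $\|\Delta\zeta\|^{s_c}\|\zeta\|^{2-s_c}$ to the power $(q-1)/2$ and using $\tfrac{(q-1)s_c}{2}=D-2$, $\tfrac{(q-1)(2-s_c)}{2}=E$ gives exactly $y_\ast=\|\Delta\zeta\|^{s_c}\|\zeta\|^{2-s_c}$, and then $f(y_\ast)=\tfrac{D-2}{D}\|\Delta\zeta\|^2\,\texttt M(\zeta)^{(2-s_c)/s_c}=\texttt E(\zeta)\,\texttt M(\zeta)^{(2-s_c)/s_c}$. Thus \eqref{ss} is precisely the statement $\texttt E(v_0)\,\texttt M(v_0)^{(2-s_c)/s_c}<f(y_\ast)=\max f$, so the displayed inequality forces $f(y(t))<f(y_\ast)$, and in particular $y(t)\neq y_\ast$, for every $t\in[0,T]$.

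Finally, because $v\in C([0,T],H^2_{rd})$, the maps $t\mapsto\|\Delta v(t)\|$ and $t\mapsto y(t)$ are continuous on $[0,T]$. In case (1), \eqref{ss1} says $y(0)<y_\ast$; if $y(t_1)>y_\ast$ for some $t_1\in(0,T]$, the intermediate value theorem would yield $t_0\in(0,t_1)$ with $y(t_0)=y_\ast$, contradicting $y(t_0)\neq y_\ast$, so $y(t)<y_\ast$ throughout, i.e. \eqref{ss1'}. Case (2) is identical with all inequalities reversed, giving \eqref{ss2'}; the endpoint $s_c=2$ (where $E=0$) is handled verbatim with \eqref{gag2} replaced by the corresponding inhomogeneous Sobolev inequality $\int_{\R^N}|v|^{1+q}|x|^b\,dx\lesssim\|\Delta v\|^{1+q}$. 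I do not expect a genuine obstacle here: the only delicate point is the bookkeeping of the scaling exponents and, above all, the verification that $\|\Delta\zeta\|^{s_c}\|\zeta\|^{2-s_c}$ is exactly the maximizer of $f$ with $\max f=\texttt E(\zeta)\texttt M(\zeta)^{(2-s_c)/s_c}$; once that is in place the continuity argument is immediate.
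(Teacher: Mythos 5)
Your argument is correct and is essentially the paper's own proof: both bound the energy from below via the sharp Gagliardo--Nirenberg inequality \eqref{gag2} and mass/energy conservation, identify the maximizer of the resulting scalar function with $\|\Delta\zeta\|^{s_c}\|\zeta\|^{2-s_c}$ through the Pohozaev identities \eqref{poh} and the value of $\texttt{C}_{opt}$, and conclude by continuity of $t\mapsto\|\Delta v(t)\|$. The only difference is cosmetic: you rescale by $\texttt{M}(v_0)^{(2-s_c)/s_c}$ and track the scale-invariant quantity $y(t)$, whereas the paper works directly with $\digamma(\|\Delta v(t)\|^2)$ and a mass-dependent constant $\varkappa$, which amounts to the same computation.
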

\begin{proof}

Using \eqref{gag2} via the energy conservation, one has

\begin{align}\label{xxx}
 \texttt{E}(v_0)
&= \|\Delta v\|^2 - \frac2{1+q}\int_{\R^N}|v|^{1+q}|x|^{b}\,dx\nonumber\\ 
&\geq \|\Delta v\|^2 - \frac{2\texttt{C}_{opt}}{1+q}\|v_0\|^E\|\Delta v\|^D.
\end{align}

We denote for short a real number, as follows

\begin{align}
{\varkappa}:=\frac{2\texttt{C}_{opt}}{1+q}{\|v_0\|}^E.
\end{align}

Thus, for any $0\leq t\leq T$,

\begin{align}\label{eqq}
\digamma\big(\|\Delta v(t)\|^2\big):=\|\Delta v(t)\|^2 - {\varkappa}\big(\|\Delta v(t)\|^2\big)^\frac D2\leq \texttt{E}(v_0).
\end{align}
The above real function has a maximum 

\begin{align}
\digamma\big({\tau}\big):=\digamma\Big(\big( \frac{2}{{\varkappa} D}\big)^{\frac{2}{D-2}}\Big)=\tau\Big( 1-\frac{2}{D}\Big).\label{tau}
\end{align}

Taking account of \eqref{poh}, it follows that

\begin{align}
\texttt{E}(\zeta)=\frac{D-2}{D}\|\Delta\zeta\|^2= \frac{D-2}{E}\texttt{M}(\zeta) .    
\end{align}

So, taking account of \eqref{ss}, we write

\begin{align}\label{x}
\texttt{E}(v_0)< \frac{D-2}{E}\texttt{M}(\zeta)^{\frac2{s_c}}{\texttt{M}(v_0)}^{\frac{s_c-2}{s_c}}.
\end{align}

Now, by \eqref{part32} via \eqref{tau} and the identity $s_c(q-1)=2(D-2)$, we get

\begin{align}
\digamma({\tau})
&=\Big(\frac{1+q}{{\texttt{C}_{opt}}{\texttt{M}(v_0)}^{\frac{E}2}D}\Big)^\frac2{D-2}\Big(1-\frac2D\Big)\nonumber\\
&=\Big((\frac{E}{D})^{1-\frac D2}(\texttt{M}(\zeta))^\frac{q-1}2({\texttt{M}(v_0)})^{-\frac{E}2}\Big)^\frac2{D-2}\Big(1-\frac2D\Big)\nonumber\\
&=\frac{D-2}E\Big((\texttt{M}(\zeta))^\frac{q-1}2({\texttt{M}(v_0)})^{-\frac{E}2}\Big)^\frac2{D-2}\nonumber\\
&= \frac{D-2}{E}{\texttt{M}(v_0)}^{\frac{s_c-2}{s_c}}\texttt{M}(\zeta)^{\frac2{s_c}}.\label{xx}
\end{align}

Collecting \eqref{x}, \eqref{xx} and \eqref{eqq}, yields

\begin{align}\label{ss3} 
\digamma(\|\Delta v(t)\|^2) \leq {\texttt{E}(v_0)}<\digamma({\tau}). \end{align}

Moreover, by \eqref{tau} via \eqref{xx}, we have

\begin{align}
    {\tau}=\frac DE\texttt{M}(\zeta)^{\frac2{s_c}}\texttt{M}(v_0)^{\frac{s_c-2}{s_c}}.\label{to}
\end{align}

Now, the assumption \eqref{ss1} via \eqref{poh} and \eqref{to}, reads

\begin{align}
\|\Delta v_0\|^2
<\Big(\frac{\texttt{M}(\zeta)}{{\texttt{M}(v_0)}}\Big)^\frac{2-s_c}{s_c}\|\Delta\zeta\|^2=\frac DE\Big(\frac{\texttt{M}(\zeta)}{{\texttt{M}(v_0)}}\Big)^\frac{2-s_c}{s_c}\texttt{M}(\zeta)={\tau}.\label{4.1}
\end{align}

Taking account of \eqref{ss3} via \eqref{4.1} and a continuity argument, yields

\begin{align}
\|\Delta v(t)\|^2<{\tau}=\Big(\frac{\texttt{M}(\zeta)}{{\texttt{M}(v_0)}}\Big)^\frac{2-s_c}{s_c}\|\Delta\zeta\|^2    ,\quad\mbox{for every}\quad t\in [0, T].
\end{align}

 The inequality \eqref{ss1'} is proved and \eqref{ss2'} follows with a similar way.

\end{proof}

\subsection{Global existence}
Thanks to \eqref{ss1'}, we have 

\begin{align} 
\|\Delta {v}\|_{L^\infty(L^2)}\leq \Big(\frac{\texttt{M}(\zeta)}{{\texttt{M}(v_0)}}\Big)^\frac{2-s_c}{2s_c}\|\Delta\zeta\|.\label{4.2}
\end{align}

The global existence follows by the bound \eqref{4.2}.

\subsection{Non-global existence}
Here, we prove the second part of Theorem \ref{glb}. Let $\chi:\R^N\to\R$ be a convex smooth function and the real function defined on $[0,T^{\max})$, by

\begin{align}
    M:=M_\chi(v):t\mapsto2\int_{\R^N}\nabla\chi(x)\cdot\Im\big(\nabla v(t,x)\bar v(t,x)\big)\,dx.\label{mrwtz}
\end{align}

We adopts the convention that repeated indexes are summed. Let us start with a Morawetz type identity \cite[Lemma 3.3]{st4}.

\begin{lem}\label{mrw1}

Then, the following identity hold on $[0,T^{\max})$,

\begin{align}
\frac{\partial}{\partial t}M
&=-2\int_{\R^N}\Big(2\partial_{jk}\Delta\chi\partial_jv\partial_k\bar v-\frac12(\Delta^3\chi)|v|^2-4\partial_{jk}\chi\partial_{ik}u\partial_{ij}\bar v\nonumber\\
&+\Delta^2\chi|\nabla v|^2+\frac{q-1}{1+q}(\Delta\chi)|v|^{1+q}|x|^{b}-\frac2{1+q}|v|^{1+q}\nabla\chi\cdot\nabla(|x|^{b})\Big)\,dx.\label{mrw}
\end{align}
\end{lem}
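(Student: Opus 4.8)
The plan is to establish \eqref{mrw} by differentiating the functional $M$ in time, using the equation \eqref{S} to eliminate $\partial_t v$, and then integrating by parts several times. From \eqref{S} we have $\partial_t v=-\textnormal{i}\big(\Delta^2 v-f\big)$ with $f:=|x|^{b}|v|^{q-1}v$, so that, using $\Im(-\textnormal{i}z)=-\Re(z)$, $\Im(\textnormal{i}z)=\Re(z)$ and $\Re(\partial_j v\,\Delta^2\bar v)=\Re(\partial_j\bar v\,\Delta^2 v)$, a direct differentiation of \eqref{mrwtz} gives
\begin{align*}
\frac{\partial}{\partial t}M=2\int_{\R^N}\partial_j\chi\,\Re\big(\partial_j\bar v\,\Delta^2 v-\bar v\,\partial_j\Delta^2 v\big)\,dx+2\int_{\R^N}\partial_j\chi\,\Re\big(\bar v\,\partial_j f-\partial_j v\,\bar f\big)\,dx.
\end{align*}
Call the first integral the biharmonic part $L$ and the second the nonlinear part $N$; equivalently one may pair $\partial_j\chi$ against the local momentum conservation law $\partial_t\big(2\Im(\bar v\,\partial_j v)\big)=-\partial_k T_{jk}[v]+F_j[v]$ attached to \eqref{S}, which amounts to the same computation.

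For $N$, write $f=g\,v$ with $g:=|x|^{b}|v|^{q-1}$ real, so that $\bar f=g\bar v$ and $\Re\big(\bar v\,\partial_j f-\partial_j v\,\bar f\big)=|v|^2\,\partial_j g$; hence $N=2\int_{\R^N}\nabla\chi\cdot\nabla g\,|v|^2\,dx$. Expanding $\nabla g$, using the elementary identity $|v|^2\nabla(|v|^{q-1})=\frac{q-1}{q+1}\nabla(|v|^{q+1})$ and then integrating by parts,
\begin{align*}
\int_{\R^N}|x|^{b}\,\nabla\chi\cdot\nabla(|v|^{q+1})\,dx=-\int_{\R^N}\big(\nabla(|x|^{b})\cdot\nabla\chi+|x|^{b}\Delta\chi\big)|v|^{q+1}\,dx,
\end{align*}
one collects $N=\frac{4}{1+q}\int_{\R^N}|v|^{1+q}\,\nabla\chi\cdot\nabla(|x|^{b})\,dx-\frac{2(q-1)}{1+q}\int_{\R^N}(\Delta\chi)\,|v|^{1+q}\,|x|^{b}\,dx$, which is exactly the contribution of the last two terms in \eqref{mrw}.

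For $L$, rewrite $\partial_j\bar v\,\Delta^2 v-\bar v\,\partial_j\Delta^2 v=2\,\partial_j\bar v\,\Delta^2 v-\partial_j\big(\bar v\,\Delta^2 v\big)$; integrating the exact-derivative term by parts yields $L=4\int_{\R^N}\partial_j\chi\,\Re\big(\partial_j\bar v\,\Delta^2 v\big)\,dx+2\int_{\R^N}\Delta\chi\,\Re\big(\bar v\,\Delta^2 v\big)\,dx$. One then integrates by parts repeatedly in each of these two terms, each step transferring two of the four derivatives off $v$ onto the weight $\chi$, exploiting $\Delta^2=\Delta\circ\Delta$ and the symmetry of second derivatives, and discarding boundary terms. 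After collecting like terms, all contributions regroup into the four types $\partial_{jk}\Delta\chi\,\partial_j v\,\partial_k\bar v$, $(\Delta^3\chi)\,|v|^2$, $\partial_{jk}\chi\,\partial_{ik}v\,\partial_{ij}\bar v$ and $(\Delta^2\chi)\,|\nabla v|^2$, with respective coefficients $2,-\frac12,-4,1$ inside $-2\int_{\R^N}(\cdots)\,dx$. This is precisely the free biharmonic Morawetz identity of \cite[Lemma 3.3]{st4}; the present statement is obtained from it by replacing $|x|^{-b}$ with $|x|^{b}$, a change that affects only the nonlinear part $N$ treated above.

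The main obstacle is the purely algebraic bookkeeping in $L$: reducing a fourth-order quadratic expression through many integrations by parts while keeping exact track of the sign and coefficient of every cross term, and verifying that all terms carrying fewer than two derivatives on $v$ coalesce into the single $(\Delta^3\chi)|v|^2$ term with coefficient $-\frac12$, while the mixed-Hessian contractions assemble into $-4\,\partial_{jk}\chi\,\partial_{ik}v\,\partial_{ij}\bar v$. A secondary point is the justification of the manipulations: \eqref{mrw} is first derived for smooth, sufficiently decaying $v$ (all integrals then converge, $\chi$ being taken with at most polynomial growth and bounded derivatives) and then extended to solutions $v\in C([0,T^{\max}),H^2_{rd})$ of \eqref{S} by a standard regularization/approximation argument, or read as an identity valid in the distributional sense on $[0,T^{\max})$.
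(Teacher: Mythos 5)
Your proposal is correct and, in substance, matches the paper's treatment: the paper offers no proof of this lemma at all, simply importing it from \cite[Lemma 3.3]{st4}, and your argument likewise defers the heavy algebraic part (the free biharmonic contribution $L$) to that known identity, whose coefficients $2,-\tfrac12,-4,1$ inside $-2\int(\cdots)\,dx$ you state correctly (they agree with the Boulenger--Lenzmann virial computation; e.g.\ for $\chi=|x|^2$ both your $L$ and \eqref{mrw} give $16\|\Delta v\|^2$). You actually go further than the paper in the one place where the present setting differs from the cited reference: your explicit computation of the nonlinear part, $\Re(\bar v\,\partial_j f-\partial_j v\,\bar f)=|v|^2\partial_j g$ followed by the identity $|v|^2\nabla(|v|^{q-1})=\tfrac{q-1}{q+1}\nabla(|v|^{q+1})$ and one integration by parts, correctly reproduces the two weighted terms $\tfrac{q-1}{1+q}(\Delta\chi)|v|^{1+q}|x|^{b}$ and $-\tfrac{2}{1+q}|v|^{1+q}\nabla\chi\cdot\nabla(|x|^{b})$ of \eqref{mrw}, which is precisely the part that changes when $|x|^{-b}$ is replaced by the growing weight $|x|^{b}$. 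The only residual gap is that the term-by-term integration by parts producing $L$ is described rather than executed, but since both you and the paper rest that step on \cite{st4}, this is acceptable.
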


The next result \cite[Page 13]{bl}, will be used here.

\begin{lem}\label{ode}
The local solution is non global if we assume that, for some positive real numbers $t_0,\alpha>0$, yields

\begin{align}
M(t)\leq-\alpha\int_{t_0}^t\|\Delta v(s)\|^2\,ds,\quad\forall t\geq t_0.    
\end{align}

\end{lem}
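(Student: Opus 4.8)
The plan is to turn the stated hypothesis into a scalar differential inequality of the form $z'\gtrsim z^{4}$ with $z$ positive, and then invoke the classical fact that such a $z$ must blow up in finite time. We may assume $v_{0}\neq 0$, since otherwise $v\equiv 0$ is global and the hypothesis is vacuous. Set
\begin{align*}
z(t):=\int_{t_{0}}^{t}\|\Delta v(s)\|^{2}\,ds,\qquad t\in[t_{0},T^{\max}).
\end{align*}
Because $v\in C([0,T^{\max}),H^{2}_{rd})$, the map $s\mapsto\|\Delta v(s)\|^{2}$ is continuous, so $z$ is non-decreasing, of class $C^{1}$ with $z'(t)=\|\Delta v(t)\|^{2}$, and finite on $[t_{0},T^{\max})$. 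Moreover $\|\Delta v(s)\|>0$ for every $s$ — otherwise $v(s)\equiv 0$, contradicting the mass conservation $\|v(s)\|^{2}=\texttt{M}(v_{0})>0$ — so $z(t)>0$ for all $t>t_{0}$.

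The key ingredient is a coercivity bound for the Morawetz functional \eqref{mrwtz}, namely $|M(t)|\leq C\,\|\Delta v(t)\|^{1/2}$ with $C=C(\chi,\texttt{M}(v_{0}))$. It follows from Cauchy--Schwarz, mass conservation, and the interpolation inequality $\|\nabla v\|\leq\|v\|^{1/2}\|\Delta v\|^{1/2}$ (immediate from Plancherel and Cauchy--Schwarz in Fourier variables):
\begin{align*}
|M(t)|\leq 2\|\nabla\chi\|_{\infty}\,\|\nabla v(t)\|\,\|v(t)\|\leq 2\|\nabla\chi\|_{\infty}\,\texttt{M}(v_{0})^{3/4}\,\|\Delta v(t)\|^{1/2},
\end{align*}
where we use that the weight $\chi$ entering \eqref{mrwtz} has bounded gradient (this is the case for the truncated virial weight relevant to the blow-up argument). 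Combining this with the standing hypothesis $M(t)\leq-\alpha z(t)$ gives, for every $t\in(t_{0},T^{\max})$,
\begin{align*}
\alpha\,z(t)\leq -M(t)\leq C\,\|\Delta v(t)\|^{1/2}=C\,\big(z'(t)\big)^{1/4},
\end{align*}
that is, $z'(t)\geq c_{0}\,z(t)^{4}$ on $(t_{0},T^{\max})$ with $c_{0}:=(\alpha/C)^{4}>0$.

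To conclude, fix any $t_{1}>t_{0}$, so that $z(t_{1})>0$. On $[t_{1},T^{\max})$ the function $f:=z^{-3}$ is well defined and $C^{1}$ (since $z\geq z(t_{1})>0$ there), and $f'=-3z^{-4}z'\leq-3c_{0}$; integrating,
\begin{align*}
0<f(t)\leq z(t_{1})^{-3}-3c_{0}\,(t-t_{1})\qquad\text{for all }t\in[t_{1},T^{\max}).
\end{align*}
In particular $z(t_{1})^{-3}-3c_{0}\,(t-t_{1})>0$ for every such $t$, hence $T^{\max}\leq t_{1}+\tfrac{1}{3c_{0}}\,z(t_{1})^{-3}<\infty$, i.e.\ the solution is non-global.

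Regarding difficulty: the scheme is just bookkeeping around a standard ODE obstruction, so no step is genuinely hard. The only point requiring care is the coercivity estimate $|M(t)|\lesssim\|\Delta v(t)\|^{1/2}$, in which the boundedness of $\nabla\chi$ and the mass conservation law are both essential; with an unbounded weight such as $\chi=|x|^{2}$ one would instead need control of $\||x|v(t)\|$, which is not available under the sole energy-space assumption.
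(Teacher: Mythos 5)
Your proof is correct and is essentially the standard argument from the cited reference \cite{bl} (the paper does not reproduce a proof, it only cites it): the coercivity bound $|M(t)|\lesssim\|\nabla\chi_R\|_\infty\,\texttt{M}(v_0)^{3/4}\|\Delta v(t)\|^{1/2}$ is exactly the one the paper itself invokes in \eqref{5.14}, and feeding it into the hypothesis yields the Riccati-type inequality $z'\gtrsim z^4$ whose solutions blow up in finite time. Your side remark is also apt: the lemma as stated allows a general convex $\chi$, but the argument genuinely requires $\nabla\chi$ bounded, which holds for the truncated weight $\chi_R$ actually used in the blow-up proof.
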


The next collection of radial identities will play an important role in the forthcoming developments.

 \begin{align}
\frac{\partial^2}{\partial x_j\partial x_k}&:=\partial_{jk}=\Big(\frac{\delta_{jk}}r-\frac{x_jx_k}{r^3}\Big)\partial_r+\frac{x_jx_k}{r^2}\partial_r^2;\label{symm}\\
\Delta&=\partial_r^2+\frac{N-1}r\partial_r,\quad \nabla=\frac x{r}\partial_r.\label{sym1}
\end{align}

Indeed, using \eqref{symm}, we have

\begin{align}
  \partial_{jk}\chi\partial_{ik}v\partial_{ij}\bar v
  &=\Big[\Big(\frac{\delta_{jk}}r-\frac{x_jx_k}{r^3}\Big)\partial_r\chi+\frac{x_jx_k}{r^2}\partial_r^2\chi\Big]\partial_{ik}v\partial_{ij}\bar v\nonumber\\
    &=\frac{\partial_r\chi}{r}\sum_k|\nabla v_k|^2+\Big(\frac{\partial_r^2\chi}{r^2}-\frac{\partial_r\chi}{r^3}\Big)\sum_k|x\cdot\nabla v_k|^2\label{5.1}.
\end{align}

We plug \eqref{5.1} in \eqref{mrw}, but also we use \eqref{sym1}, to get

\begin{align}
\frac{\partial}{\partial t}M
&=-2\int_{\R^N}\Big(2\partial_{jk}\Delta\chi\partial_jv\partial_k\bar v-\frac12(\Delta^3\chi)|v|^2+\Delta^2\chi|\nabla v|^2\nonumber\\
&-4\Big(\frac{\partial_r\chi}{r}\sum_k|\nabla v_k|^2+\Big(\frac{\partial_r^2\chi}{r^2}-\frac{\partial_r\chi}{r^3}\Big)\sum_k|x\cdot\nabla v_k|^2\Big)\nonumber\\
&+\frac{q-1}{1+q}\Big[\partial_r^2\chi+\big({N-1}-\frac{2b}{q-1}\big)\frac{\partial_r\chi}r\Big]|v|^{1+q}|x|^{b}\Big)\,dx.\label{5.2}
\end{align}

Now, we let the particular choice

\begin{align}
\chi:r\to\left\{
\begin{array}{ll}
r^2,\quad\mbox{if}\quad 0\leq r\leq1;\\
0,\quad\mbox{if}\quad  r\geq10.
\end{array}
\right.    
\end{align}

So, one has on the unit ball of $\R^N$,

\begin{align}
\chi_{ij} =2\delta_{ij},\quad\Delta\chi =2N\quad\mbox{and}\quad \partial^\gamma\chi= 0 \quad\mbox{for}\quad |\gamma| \geq 3.\label{pr}    
\end{align}

By \cite[Lemma 2.1]{vdd2}, we have

\begin{align}\label{prpr}
\max\{\frac{\chi_R'}r-2, \chi_R''-\frac{\chi_R'}r\}\leq0,\quad\partial^\gamma\chi_R\lesssim R^{2-|\gamma|}, \quad\mbox{for any}\quad |\gamma|\leq6.
\end{align}

Let, for $R>0$, the notation

\begin{align}
    \chi_R:=R^2\chi\big(\frac{|\cdot|}R\big)\quad\mbox{and}\quad M_R:=M_{\chi_R}.
\end{align}

Thus, by \eqref{5.2}, it follows that

\begin{align}
\frac{\partial}{\partial t}M_R
&=8\|\Delta v\|^2-2\int_{\R^N}\Big(2\partial_{jk}\Delta\chi_R\partial_jv\partial_k\bar v-\frac12(\Delta^3\chi_R)|v|^2+\Delta^2\chi_R|\nabla v|^2\nonumber\\
&-4\Big(\frac{\partial_r\chi_R}{r}-2\Big)\sum_k|\nabla v_k|^2-4\Big(\frac{\partial_r^2\chi_R}{r^2}-\frac{\partial_r\chi_R}{r^3}\Big)\sum_k|x\cdot\nabla v_k|^2\nonumber\\
&+\frac{q-1}{1+q}\Big[\big(\partial_r^2\chi_R-2)+\big({N-1}-\frac{2b}{q-1}\big)\big(\frac{\partial_r\chi_R}r-2\big)\Big]|v|^{1+q}|x|^{b}\nonumber\\
&+\frac{q-1}{1+q}\Big[2+2\big({N-1}-\frac{2b}{q-1}\big)\Big]|v|^{1+q}|x|^{b}\Big)\,dx.\label{5.3}
\end{align}

We rewrite \eqref{5.3} as follows

\begin{align}
\frac{\partial}{\partial t}M_R
&=8\Big(\|\Delta v\|^2-\frac{D}{1+q}\int_{\R^N}|v|^{1+q}|x|^b\,dx\Big)\nonumber\\
&-2\int_{\R^N}\Big(2\partial_{jk}\Delta\chi_R\partial_jv\partial_k\bar v-\frac12(\Delta^3\chi_R)|v|^2+\Delta^2\chi_R|\nabla v|^2\nonumber\\
&-4\Big(\frac{\partial_r\chi_R}{r}-2\Big)\sum_k|\nabla v_k|^2-4\Big(\frac{\partial_r^2\chi_R}{r^2}-\frac{\partial_r\chi_R}{r^3}\Big)\sum_k|x\cdot\nabla v_k|^2\nonumber\\
&+\frac{q-1}{1+q}\Big[\big(\partial_r^2\chi_R-2)+\big({N-1}-\frac{2b}{q-1}\big)\big(\frac{\partial_r\chi_R}r-2\big)\Big]|v|^{1+q}|x|^{b}\Big)\,dx.\label{5.4}
\end{align}

Now, using \eqref{prpr} via the conservation of the mass, we have

\begin{align}
(II)
&:=\Big|\int_{\R^N}\Big(2\partial_{jk}\Delta\chi_R\partial_jv\partial_k\bar v-\frac12(\Delta^3\chi_R)|v|^2+\Delta^2\chi_R|\nabla v|^2\Big)\,dx\Big|\nonumber\\
&\lesssim R^{-2}\|\nabla v\|^2+R^{-4}\|v\|^2\nonumber\\
&\lesssim R^{-2}\|\nabla v\|^2+R^{-4}.\label{5.5}
\end{align}

Moreover, since $supp\Big[\big(\partial_r^2\chi_R-2)+\big({N-1}-\frac{2b}{q-1}\big)\big(\frac{\partial_r\chi_R}r-2\big)\Big]\subset \{|x|\geq R\}$, by \eqref{frcs}, we have

\begin{align}
(IV)
&:=\Big|\int_{\R^N}\Big[\big(\partial_r^2\chi_R-2)+\big({N-1}-\frac{2b}{q-1}\big)\big(\frac{\partial_r\chi_R}r-2\big)\Big]|v|^{1+q}|x|^{b}\,dx\Big|\nonumber\\
&\lesssim \int_{|x|\geq R}|x|^{b-\frac{(N-1)(q-1)}2}\big(|x|^\frac{N-1}2|v|\big)^{q-1}|v|^2\,dx\nonumber\\
&\lesssim R^{-(\frac{(N-1)(q-1)}2-b)}\|\nabla v\|^{\frac{q-1}2}.\label{5.6}
\end{align}

We gather \eqref{5.4}, \eqref{5.5} and \eqref{5.6}, via \eqref{prpr} and Young estimate with 

\begin{align}
    \|\nabla\cdot\|^2\lesssim\|\cdot\|\,\|\Delta\cdot\|\label{ntr},    
    \end{align}

to get for $q<9$,

\begin{align}
\frac{\partial}{\partial t}M_R
&\lesssim\|\Delta v\|^2-\frac{D}{1+q}\int_{\R^N}|v|^{1+q}|x|^b\,dx+R^{-1}\big(R^{-1}\|\nabla v\|^2\big)+R^{-4}+R^{-(\frac{(N-1)(q-1)}2-b)}\|\nabla v\|^{\frac{q-1}2}\nonumber\\
&\lesssim D\texttt{E}(v_0)-(D-2)\|\Delta v\|^2+\big(R^{-2}+R^{-2(N-1-\frac{2b}{q-1})}\big)\|\nabla v\|^4+R^{-2}+R^{-\frac{4}{9-q}(\frac{(N-1)(q-1)}2-b)}\nonumber\\
&\lesssim D\texttt{E}(v_0)-(D-2)\|\Delta v\|^2+\big(R^{-2}+R^{-2(N-1-\frac{2b}{q-1})}\big)\|\Delta v\|^2+R^{-2}+R^{-\frac{4}{9-q}(\frac{(N-1)(q-1)}2-b)}.\label{5.7}
\end{align}

Moreover, for $q=9$, yields

\begin{align}
\frac{\partial}{\partial t}M_R
&\lesssim D\texttt{E}(v_0)-(D-2)\|\Delta v\|^2+\big(R^{-2}+R^{-(4(N-1)-b)}\big)\|\Delta v\|^2+R^{-2}.\label{5.8}
\end{align}

\begin{itemize}
    \item Mass-super-critical regime. Thanks to \eqref{ss}, there is $0<\rho<1$ such that

\begin{align}
    {\texttt{E}(v_0)}^{s_c}{\texttt{M}(v_0)}^{2-s_c}<[(1-{\rho})\texttt{E}(\zeta)]^{s_c}\texttt{M}(\zeta)^{2-s_c}.\label{5.9}
\end{align}

Now, by \eqref{5.9} with \eqref{ss2} via Pohozaev identities, we write

\begin{align}
\|\Delta v(t)\|^2
&>\|\Delta\zeta\|^2\Big(\frac{\texttt{M}(\zeta)}{\texttt{M}(v_0)}\Big)^\frac{2-s_c}{s_c}\nonumber\\
&=\frac D{D-2}\texttt{E}(\zeta)\Big(\frac{\texttt{M}(\zeta)}{\texttt{M}(v_0)}\Big)^\frac{2-s_c}{s_c}\nonumber\\
>&\frac D{(1-{\rho})(D-2)}\texttt{E}(v_0).\label{5.10}
\end{align}

Hence, by \eqref{5.10}, yields $ D\texttt{E}(v_0)-(D-2)\|\Delta v\|^2\lesssim -\|\Delta v\|^2$ and thus, using \eqref{5.7} or \eqref{5.8} via \eqref{5.10}, yields for $R\gg1$,

\begin{align}
\frac{\partial}{\partial t}M_R
&\lesssim -\|\Delta v\|^2.\label{5.11'}
\end{align}

Finally, the proof is ended thanks to Lemma \ref{ode}, via \eqref{5.11'}.

\item Mass-critical regime. By \eqref{5.7} and the equality $D=2$, yields

\begin{align}
\frac{\partial}{\partial t}M_R
&\lesssim \texttt{E}(v_0)+\big(R^{-2}+R^{-2(N-1-\frac{2b}{q-1})}\big)\|\Delta v\|^2+R^{-2}+R^{-\frac{4}{9-q}(\frac{(N-1)(q-1)}2-b)}.\label{5.11}
\end{align}

If $T^{\max}<\infty$, we are done. Otherwise, assume by contradiction that 

\begin{align}
    T^{\max}=\infty\quad\mbox{and}\quad \sup_{t\geq0}\|\Delta v(t)\|<\infty.\label{5.12}
\end{align}

Hence, letting $R\gg1$ in \eqref{5.11} via the fact that energy is negative and by use of \eqref{5.12}, we get

\begin{align}
    \frac{\partial}{\partial t}M_R
&\lesssim -1.\label{5.13}
\end{align}

Thus, integrating \eqref{5.13}, we see that there is $t_0>0$, such that 

\begin{align}
M_R<0,\quad\mbox{on}\quad (t_0,\infty).\label{5.15}
\end{align}

Moreover, by H\"older estimate via \eqref{mrwtz}, we write

\begin{align}
 -\|\nabla\chi_R\|_\infty\|v_0\|^\frac32\sqrt{\|\Delta v(t)\|}   \leq M_R(t)\leq -c(t-t_0),\quad\forall\, t>t_0.\label{5.14}
\end{align}

Since \eqref{5.14} implies that 

\begin{align}
 {\|\Delta v(t)\|}   \gtrsim (t-t_0)^2,\quad\forall\, t>t_0,\label{5.16}
\end{align}

which contradicts \eqref{5.15}, the proof is achieved.
\end{itemize}

\section{Appendix: compact Sobolev embedding}

In this section, we prove the inhomogeneous compact Sobolev injection, given in Lemma \ref{compact}. Take $(v_n)\in\big( H^2_{rd}\big)^\mathbb{N}$ such that 
    
    \begin{align}
        \sup_n\|v_n\|_{H^2}<\infty.\label{bnd}
    \end{align}
    
     Hence, it is sufficient to establish that $\displaystyle\lim_n\int_{\R^N}|v_n(x)|^{1+q}|x|^{b}\,dx= 0$, under the assumption $v_n \rightharpoonup0$  weakly to zero in $H^2.$ Letting $\delta>0$, it follows that by \eqref{gag2} and \eqref{bnd},

     \begin{align}
\int_{|x|\leq\delta}|v_n(x)|^{1+q}\,|x|^{b}\,dx
&\leq \delta^b\int_{\R^N}|v_n(x)|^{1+q}\,dx\nonumber\\
&\leq \delta^b\|v_n\|^{E}\|\Delta v_n\|^D\,\nonumber\\
&\leq C\delta^b.\label{cp1}         
     \end{align}

Additionally, using the Strauss inequality \eqref{frcs} in conjunction with the Rellich theorem, yields

\begin{align}
\int_{{\delta}\leq|x|\leq\frac1{\delta}}|v_n(x)|^{1+q}\,|x|^{b}\,dx
&\leq C\|v_n\|_{L^\infty({\delta}\leq|x|\leq\frac1{\delta})}^{q-1}\int_{{\delta}\leq|x|\leq\frac1{\delta}}|v_n(x)|^2\,dx\nonumber\\
&\leq C\delta^{-\frac{N-1}{2}}\|v_n\|_{H^1}^{q-1}\int_{{\delta}\leq|x|\leq\frac1{\delta}}|v_n(x)|^2\,dx\nonumber\\
&\to0,\quad\mbox{as}\quad n\to\infty.    \label{cp2}
\end{align}

Furthermore, by \eqref{frcs} and the fact that $1+\frac{2b}{N-1}<q$, we get

\begin{align}
\int_{|x|\geq\frac1{\delta}}|v_n(x)|^{1+q}\,|x|^{b}\,dx
&=\int_{|x|\geq\frac1{\delta}}|x|^{b-(q-1)\frac{N-1}2}\big(|x|^{\frac{N-1}2}|v_n(x)|\big)^{q-1}|v_n(x)|^2\,dx\nonumber\\
&\leq C\|v_n\|_{H^1}^{q-1}\int_{|x|\geq\frac1\delta}|v_n(x)|^2\,|x|^{b-(q-1)\frac{N-1}2}\,dx\nonumber\\
&\leq C\delta^{(q-1)\frac{N-1}2-b}.\label{cp3}
\end{align}

We collect \eqref{cp1}, \eqref{cp2} and \eqref{cp3} to close the proof by taking $0<\delta\ll1.$

\section*{Data availability statement}

On behalf of all authors, the corresponding author states that there is no conflict of interest. No data-sets were generated or analyzed during the current study.



\end{document}